\let\Im\undefined
\DeclareMathOperator\Crit {Crit}
\renewcommand\ker {\mathop{\mathrm{Ker}}}
\DeclareMathOperator\Exc {Exc}
\DeclareMathOperator\Hom {Hom}
\DeclareMathOperator\Sym {Sym}
\DeclareMathOperator\Frac {Frac}
\DeclareMathOperator\Arc {Arc}
\DeclareMathOperator\Gal {Gal}
\DeclareMathOperator\Sp {Spec\,}
\DeclareMathOperator\Im {Im\,}
\DeclareMathOperator\Div {Div}
\DeclareMathOperator\Tor {Tor}
\newcommand{\tens}{\otimes}
\newcommand{\al}{\alpha}
\newcommand{\Om}{\Omega}
\newcommand{\inj}{\hookrightarrow}
\newcommand{\la}{\lambda}
\newcommand{\ga}{\gamma}
\newcommand{\ov}{\overline}
\newcommand{\La}{\Lambda}
\newcommand{\Ga}{\Gamma}
\newcommand{\wh}{\widehat}
\newcommand{\wt}{\widetilde}
\newcommand{\inv}{^{-1}}
\newcommand{\cart}{\ar@{}[rd]|-\square}
\newcommand{\N}{\mathbb{N}}
\newcommand{\Z}{\mathbb{Z}}
\newcommand{\n}{{(n)}}
\newcommand{\sh}{{\mathrm{sh}}}
\newcommand{\sep}{{\mathrm{sep}}}
\newcommand{\up}[1]{``{#1}$''$}
\renewcommand{\Rsh}{R^\sh}
\newcommand{\Rshc}{\wh{R^\sh}}
\newcommand{\mb}[1]{{\mathbb{#1}}}
\newcommand{\mc}[1]{{\mathcal{#1}}}
\newcommand{\mf}[1]{{\mathfrak{#1}}}
\author{Cyrille Corpet\footnotemark[2]}
\title{Around the Mordell-Lang and Manin-Mumford conjectures in positive characteristic}
\begin{document}

\maketitle
\footnotetext[2]{Doctorant contractuel, Institut de Mathématiques, \'Equipe \'Emile Picard, Université Paul Sabatier, 118 Route de Narbonne, 31062 Toulouse Cedex 9, France, E\nobreakdash-mail:~cyrille.corpet@math.univ-toulouse.fr}
\theoremstyle{plain}
\newtheorem{thm}{Theorem}[section]
\newtheorem*{thmain}{Theorem \ref{main}}
\newtheorem{lem}[thm]{Lemma}
\newtheorem{prop}[thm]{Proposition}
\newtheorem{cor}[thm]{Corollary}
\newtheorem{conj}[thm]{Conjecture}
\newtheorem*{claim}{Claim}

\theoremstyle{definition}
\newtheorem{defi}{Definition}[section]
\newtheorem{exmp}{Example}[section]

\theoremstyle{remark}
\newtheorem*{rems}{Remarks}
\newtheorem*{rem}{Remark}
\newtheorem*{note}{Note}
\newtheorem{case}{Case}

% ------------- La tête de section de l'intro est dans intro.tex, après l'abstract ---------
\begin{quotation}
\subsubsection*{Abstract}
{
\hspace \parindent We give a complete proof for the implication from the Manin-Mumford conjecture to the Mordell-Lang conjecture in positive characteristic, using integral models of semi-abelian varieties over a ring of formal power series, and the machinery of jet schemes.
}
\end{quotation}

\section*{Introduction}
\hspace \parindent The aim of this article is to prove that, in positive characteristic, the Manin-Mumford conjecture implies the Mordell-Lang conjecture. To prove this, we will follow the idea of R\"ossler in \cite{RosA}, which is itself based on the work of Hrushovski \cite{Hru}, Buium \cite{Buium} and many others. More precisely, we will extend the result of \cite{RosA}, which is restricted to abelian varieties over fields of transcendence dimension $1$ and finitely generated subgroups. However this paper will contain the whole proof of the result announced, and it is not required of the reader to have previously read \cite{RosA}. Note that the present paper, together with (for instance) R\"ossler and Pink's proof of the Manin-Mumford conjecture \cite{PinkRo}, gives the first proof of the Mordell-Lang conjecture which is completly algebro-geometric, since previous proofs as Hrushovski's \cite{Hru} used some deep results in model theory.

\bigskip

Let $A$ be a group variety over a field $F$. We say that it is a semi-abelian variety if it fits in an exact sequence of group varieties (which we call a presentation of the semi-abelian variety)
\[0\to T\to A\to A_0\to 0\]
where $A_0$ is an abelian variety and $T$ is a torus, \emph{ie} a group scheme over $F$ such that $T\tens_F\ov F\simeq \mb G_{m,\ov F}^d$, for some $d\geq 0$. Here, $\mb G_{m,\ov F}$ is the multiplicative group of $\ov F$.

Suppose from now on that $F$ is algebraically closed. If we have an irreducible closed subvariety $X\inj A$, we can define its translation stabilizer $\mathrm{Stab}(X)$ as the closed subgroup variety of $A$ whose points stabilize $X$ in $A$ (see \cite[Exp. VIII, 6]{SGA3b} for a proper definition of this as a variety). We call $C=\mathrm{Stab}(X)^{\mathrm{red}}$.

\begin{defi}
We say that $X\inj A$ is isotrivial up to quotient if $F$ is of positive characteristic, and we have the following data:
\begin{itemize}
 \item a semi-abelian variety $B$ over $F$;
 \item a homomorphism with finite kernel $h:B\to A/C$;
 \item a model $\mb B$ of $B$ over a finite field $\mb F_q\subset F$;
 \item an irreducible closed subscheme $\mb X$ of $\mb B$;
 \item a point $a\in (A/C)(F)$, such that $X/C=a+h_*(\mb X\times_{\mb F_q} F)$.
\end{itemize}

\end{defi}

\begin{rem}
 If for any semi-abelian variety $\mb B$ over a finite field there is no non-trivial isogeny $B\to A/C$, to say that $X$ is isotrivial up to quotient means that it is the translate of a subgroup variety of $A$.
\end{rem}

If $A$ is a semi-abelian variety over an algebraically closed field $F$, we write $\Tor(A(F))$ for the group of torsion points, and if $\Ga\subset A(F)$ is a subgroup, we define its prime-to-p divisor group $\Ga'=\Div^p(\Ga)$
\[\Div^p (\Ga) := \{x \in A(F) \vert\ \exists m \in \mb N,\ p\nmid m\ \mathrm{and}\ [m]_{\mc A} x \in \Ga \}.\]
If $X$ is a subvariety, and $Q\in A(F)$, we write $X^{+Q}$ for the translated subvariety. More generally, if $\mc A\to S$ is a group $S$-scheme, $\mc X\to S$ a subscheme and $Q\in \mc A(S)$, the schematic image of $\mc X$ by translation by $Q$ is another $S$-subscheme of $\mc A$ which we note $\mc X^{+Q}$.

\bigskip

Here are the statements of the Manin-Mumford and Mordell-Lang conjectures in positive characteristic:
\begin{thm}[Manin-Mumford conjecture {[}Scanlon, 2001\cite{Scan}, R\"ossler-Pink, 2004 \cite{PinkRo}{]}]\label{MM}
 Let $A$ be a semi-abelian variety over an algebraically closed field $F$ of characteristic $p>0$, $X$
be an irreducible reduced closed subscheme of $A$.

If $X\cap\Tor(A(F))$ is dense in $X$, then $X$ is isotrivial up to quotient.
\end{thm}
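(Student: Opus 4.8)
The approach I would follow is the algebro-geometric one of Pink and R\"ossler \cite{PinkRo}, in which the statement is reduced to a structure theorem for subvarieties that are invariant, in a suitable sense, under a power of Frobenius (the ``$\psi$-invariant'' subvarieties of \emph{loc.\ cit.}). I begin with standard reductions. Since $X\cap\Tor(A(F))$ dense in $X$ forces $(X/C)\cap\Tor((A/C)(F))$ dense in $X/C$, and since $\mathrm{Stab}(X/C)^{\mathrm{red}}$ is trivial, it suffices to treat the case where $C=\mathrm{Stab}(X)^{\mathrm{red}}$ is already trivial and to produce for $X$ the data of the definition of \up{isotrivial up to quotient}: a semi-abelian variety $B/F$, a homomorphism $h:B\to A$ with finite kernel, a model $\mb B$ of $B$ over some $\mb F_q\subset F$, an irreducible closed $\mb X\subset\mb B$, and a point $a\in A(F)$ with $X=a+h_*(\mb X\times_{\mb F_q}F)$. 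Because the density hypothesis involves only countably many points and finitely many polynomial relations, $A$, $X$ and a Zariski-dense sequence $(t_i)_{i\in\N}$ of torsion points of $X$ all descend to a subfield $K\subset F$ that is finitely generated over $\mb F_p$. I then spread out: choose an integral affine $\mb F_p$-scheme $S$ of finite type with function field $K$, a semi-abelian group $S$-scheme $\mc A\to S$ with a closed subscheme $\mc X$ and sections $T_i\in\mc A(S)$ restricting on the generic fibre to $X$ and to the $t_i$, and shrink $S$ so that every geometric fibre of $\mc A$ is semi-abelian and every $T_i$ is fibrewise torsion.

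The first substantial step is then to show that the density of torsion forces $X$ to be $\psi$-invariant. The mechanism is that torsion points over finite fields are Frobenius-periodic: if $\mc A_s$ is the (semi-abelian) fibre of $\mc A$ over a closed point $s\in S$, whose residue field is a finite field $\mb F_q$, then $\mc A_s(\bar{\mb F}_q)$ is a torsion group on which the $q$-power Frobenius endomorphism $\varphi_s$ acts, so every point of it is fixed by some power of $\varphi_s$; and the relation $[N]T_i=0$ holds integrally over $S$ as soon as it holds generically, so each $T_i$ defines a section of a finite group $S$-scheme $\mc A[N_i]$, to which one can apply Frobenius-periodicity fibre by fibre. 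For the prime-to-$p$ part $\mc A[N_i]$ is finite \'etale over $S$ and the reduction map is injective on $T_i$; for the $p$-power part one uses in addition the factorisation of $[p]$ as Verschiebung composed with the relative Frobenius. Running over a dense set of closed points $s$, combining with the constructibility of the locus of $s$ over which $\mc X_s$ is invariant under a given Frobenius correspondence, and arguing for the uniformity of the Frobenius power and of the translation point, one deduces on the generic fibre, hence over $F$, that $X$ is invariant, up to translation, under a correspondence built from a power of the relative Frobenius of $A$. Carrying this through — in particular ensuring that one power of Frobenius and one translation serve all the $t_i$ simultaneously, and handling the toric part as well as the abelian part — is already a genuine share of the work.

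The principal obstacle, and the core of the argument, is the structure theorem: an irreducible subvariety $X$ with trivial stabiliser of a semi-abelian variety over an algebraically closed field of characteristic $p$, which is $\psi$-invariant in the above sense, is a translate of the image under a finite-kernel homomorphism of a subvariety defined over a finite field, i.e.\ is isotrivial up to quotient. This is where the real content of \cite{PinkRo} lies: it rests on a careful bookkeeping of how the translation point and the Frobenius power interact under iteration (in order to absorb, or straighten away, the translation), on a rigidity input — the family of translates of $X$ is parametrised by a quasi-projective scheme carrying an action of Frobenius, and its fixed locus is necessarily defined over $\bar{\mb F}_p$ — and on the structure theory of semi-abelian varieties; it is naturally organised together with the ``pointwise-to-geometric'' passage of the preceding paragraph. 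Granting the structure theorem, one finishes by recombining the prime-to-$p$ and $p$-power contributions, descending from $K$ back to $F$ by a standard specialisation argument, and reading off the data of the definition, which shows that $X$ is isotrivial up to quotient. (For completeness: an alternative and genuinely different proof is Scanlon's \cite{Scan}, which works in the model theory of difference fields, where prime-to-$p$ torsion points are precisely those fixed by a power of the Frobenius automorphism and the conclusion follows from a dichotomy theorem for the relevant definable groups; the route sketched here is, however, the one consistent with the algebro-geometric aims of this paper.)
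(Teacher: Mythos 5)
The paper does not prove this statement at all: Theorem \ref{MM} is an external input, quoted with attributions to Scanlon \cite{Scan} and Pink--R\"ossler \cite{PinkRo} and used as a black box in the implication ``Manin--Mumford $\Rightarrow$ Mordell--Lang''. Your text is a faithful high-level outline of the Pink--R\"ossler strategy (reduction to trivial stabiliser, spreading out, Frobenius-periodicity of torsion, and the structure theorem for $\psi$-invariant subvarieties), but it explicitly defers the substantial content --- the structure theorem itself --- to \cite{PinkRo}, which is in effect the same treatment the paper gives by citation; there is therefore no internal proof to compare it against, and no objection to raise beyond noting that your sketch is a summary of the cited proof rather than an independent one.
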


\begin{thm}[Mordell-Lang conjecture {[}Hrushovski, 1996 \cite{Hru}{]}]\label{ML}
 Let $A$ and $X$ be as above, and $\Ga\subset A(F)$ be a finitely generated subroup.

 If $X\cap\Ga'$ is dense in $X$, then $X$ is isotrivial up to quotient.
\end{thm}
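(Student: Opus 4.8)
The plan is to argue by induction on $d=\operatorname{trdeg}_{\mb F_p}K$, where $K\subseteq F$ is a finitely generated subfield with $\ov K=F$ over which $A$, $X$ and $\Ga$ are all defined; the base case $d=0$ is precisely where the Manin--Mumford conjecture (Theorem~\ref{MM}) enters. I would begin with the standard reductions. Replacing $(A,X,\Ga)$ by $(A/C,\ X/C,\ \text{image of }\Ga)$, one may assume $C=\mathrm{Stab}(X)^{\mathrm{red}}$ is trivial: the morphism $X\to X/C$ is finite and surjective (as $C$ is finite and stabilizes $X$), hence preserves Zariski density; the image of $\Ga'=\Div^p(\Ga)$ lies in the prime-to-$p$ division hull of the image of $\Ga$, which is again finitely generated; and ``isotrivial up to quotient'' is defined in terms of $X/C$, while $\mathrm{Stab}(X/C)^{\mathrm{red}}$ is trivial. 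After this reduction the goal becomes to show that $X=a+h_*(\mb X\times_{\mb F_q}F)$ for some $h\colon B\to A$ with finite kernel and some model $\mb B/\mb F_q$ of $B$. If $d=0$ then $F=\ov{\mb F_p}$, so $A(F)$ is a torsion group; in particular $\Ga'\subseteq A(F)=\Tor(A(F))$, so $X\cap\Tor(A(F))\supseteq X\cap\Ga'$ is dense in $X$ and Theorem~\ref{MM} applies.

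Now suppose $d\ge 1$. Choose a discrete valuation $v$ of $K$, trivial on $\mb F_p$, whose residue field $\kappa$ has transcendence degree $d-1$ over $\mb F_p$ (for instance, coming from a Noether normalization $\mb F_p(x_1,\dots,x_d)\subseteq K$ and the $x_d$-adic valuation). Completing the valuation ring and invoking Cohen's structure theorem, we may take $R\simeq\kappa[[t]]$ with $\Frac R=\mc K\supseteq K$. Enlarging $\mc K$ by a finite extension --- harmless, since it only replaces $\kappa$ by a finite extension and $t$ by a root --- semistable reduction yields a semi-abelian scheme $\mc A$ over $R$ with generic fibre $A_{\mc K}$; let $\mc X\subseteq\mc A$ be the schematic closure of $X_{\mc K}$, an integral $R$-flat scheme, with special fibre $X_0\subseteq A_0$, a closed subscheme of a semi-abelian variety over $\kappa$. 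Each point of the dense set $X\cap\Ga'$ is defined over a finite extension of $K$, hence spreads out to an $R'$-point of a suitable model $\mc A'$ of $A$ over the corresponding finite extension $R'$ of $R$, whose generic fibre lies in $X_{\mc K}$; reducing such sections modulo the maximal ideal produces points of $X_0(\ov\kappa)$ lying in the prime-to-$p$ division hull of a fixed finitely generated subgroup of $A_0(\ov\kappa)$, namely the reduction of $\Ga$.

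The heart of the argument, and the step I expect to be the main obstacle, is to show that this specialization does \emph{not} lose the essential information: one needs both that the reduced points remain Zariski dense in $X_0$ and, more importantly, that the family $\mc X/R$ is ``rigid'', i.e.\ constant up to the relative Frobenius of $\mc A/\kappa$. This is exactly the role of the machinery of jet (arc) schemes. Regarding $\mc A$ as a scheme over $\kappa$ and forming the jet schemes $J^n(\mc A)$, one attaches to each spreading-out section $\mc P$ its ``$t$-derivative'' $\nabla\mc P\in J^n(\mc A)(R)$; the point is that being a prime-to-$p$ multiple of an element of the \emph{rigid} (finitely generated) group $\Ga$ keeps the relevant multiplication maps separable and confines $\nabla\mc P$ to a proper subgroup-type locus governed by the relative Frobenius, which forces $\mc X$ --- once the Frobenius quotient is absorbed into ``up to quotient'' --- to be a constant family $\mc X_0\times_{\ov\kappa}R$. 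Granting this, $X_0\subseteq A_0$ over $\ov\kappa$ then satisfies the Mordell--Lang hypothesis over a field of transcendence degree $d-1$, so the induction hypothesis gives that $X_0$ is isotrivial up to quotient; constancy of the family transports this to $X_{\mc K}$, and a routine Galois-and-specialization descent along $K\subseteq\mc K\subseteq\ov{\mc K}$ --- tracking the finite kernel of $h$ and the translating point $a$ --- yields that $X$ is isotrivial up to quotient over $F$, closing the induction.
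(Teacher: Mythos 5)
Your overall strategy --- induction on the transcendence degree $d$ of a field of definition, specializing along a discrete valuation whose residue field has transcendence degree $d-1$ --- is genuinely different from the paper's, and the step you yourself flag as ``the main obstacle'' is exactly where it breaks down, for a structural reason and not merely for lack of detail. The paper does \emph{not} induct: Proposition \ref{epsilon} embeds the whole finitely generated field $F$ into a finite extension $K$ of $\mb F_p((t))$ in a single step (sending a transcendence basis to algebraically independent power series), so that the residue field of the relevant complete DVR is always $\ov{\mb F_p}$. This is a necessity, not a convenience: the closing step of the jet-scheme argument (Theorem \ref{indefp}~$(iii)$) shows that the dense set of indefinitely $p$-divisible points is killed by $Q(\tau)$ for a lift $\tau$ of the Frobenius of the \emph{finite} residue field, where $Q$ has roots of modulus $q$ and $\sqrt q$ by the Weil conjectures, and then invokes \cite[Theorem 3.5]{PinkRo}. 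In your inductive step with $d\ge 2$ the residue field $\kappa$ has positive transcendence degree, there is no Frobenius and no Weil bound, and the dichotomy ``either the exceptional locus is a proper closed subscheme of $X_0$, or torsion points are dense and Manin--Mumford applies'' is not available. Your sketch replaces this dichotomy by the assertion that the jet/arc construction ``forces $\mc X$ to be a constant family'', which is both unproved (you say ``Granting this'') and stronger than what the machinery delivers: the exceptional-scheme argument only confines $X\cap\Ga'$ to a finite union of translates of proper closed subschemes of $X$; it never establishes constancy of $\mc X/R$.

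Two smaller points. First, $C=\mathrm{Stab}(X)^{\mathrm{red}}$ need not be finite --- it can be a positive-dimensional semi-abelian subvariety --- so $X\to X/C$ is in general a $C$-fibration rather than a finite map; the reduction to trivial stabilizer still works because dominant morphisms carry dense sets to dense sets, but your justification is wrong. Second, the logical direction differs from the paper's: the paper proves the contrapositive statement (Theorem \ref{main}: if no translate of $X$ over any extension of $L$ has dense torsion, then $X\cap\Ga'$ is not dense) and then converts it into ``MM implies ML'' by a short formal argument with the invariance lemma; your direct derivation of isotriviality from density would need the unproved constancy claim above. Your base case $d=0$ is fine.
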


\begin{note}
Those two theorems stated in this form have been proved quite recently, but work on the matter both in positive and $0$ characteristics have been developped since the statement of the Mordell conjecture in 1922. For instance, McQuillan \cite{McQML} proved the "full Mordell-Lang conjecture" in characteristic $0$ (see \cite{Maz2000} for a good survey of all this).
\end{note}

The main result of the present article is the following:
\begin{thmain}
 Let $A$ be a semi-abelian variety over a field $L$ which is a finite extension of $\ov k((t))$ for some finite field $k$. Let $X$ be a subvariety and $\Ga\subset A(L)$ a finitely generated subgroup. 

Suppose that for any field extension $L/L'$ and any point $Q\in A_{L'}(L')$, the set $X_{L'}^{+Q}\cap\Tor(A_{L'}(L'))$ is not Zariski dense in $X_{L'}^{+Q}$. Then $X\cap \Ga'$ is not dense in $X$.
\end{thmain}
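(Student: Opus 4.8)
\emph{Strategy.} I would argue by contraposition: assuming $X\cap\Ga'$ is Zariski dense in $X$, I would produce a field extension $L'/L$ and a point $Q\in A_{L'}(L')$ with $X^{+Q}_{L'}\cap\Tor(A_{L'}(L'))$ Zariski dense in $X^{+Q}_{L'}$, contradicting the hypothesis. The point is to exploit the one special feature of $L$, namely that its residue field is $\ov k=\ov{\mb F_p}$, over which \emph{every} point of a semi-abelian variety is torsion: the plan is to spread everything out over the valuation ring, reduce it ``modulo the maximal ideal to every order'' — that is, pass to jet (Greenberg) schemes — and show that a Zariski-dense set of prime-to-$p$ division points of a finitely generated group, seen through all jet levels, can occur only when $X$ is, over a finite extension and up to translation, the base change of a variety defined over a finite field; unravelling that description then produces the forbidden dense set of torsion points in a translate.

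\emph{Setup.} Let $R\subset L$ be the valuation ring, a complete discrete valuation ring with residue field $\ov k=\ov{\mb F_p}$ and fraction field $L$. The hypothesis passes to every extension of $L$, and Zariski density of a set of rational points ascends along base change, so I would first replace $L$ by a finite extension over which $A$ has semi-abelian reduction, and take $\mc A\ra\Sp R$ to be a smooth group-scheme model of $A$ (the N\'eron model) with $\mc A(R)=A(L)$, so that $\Ga,\Ga'\subseteq\mc A(R)$, whose special-fibre identity component is a semi-abelian scheme; let $\mc X\subseteq\mc A$ be the schematic closure of $X$, with special fibres $\mc A_0\supseteq\mc X_0$. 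Replacing $(A,X)$ by $(A/C,X/C)$ — legitimate, since the isogeny $\mc A\ra\mc A/C$ is compatible with the N\'eron model, with $\Ga$ and $\Ga'$, with torsion, with translates, and with the notion ``isotrivial up to quotient'' — I would also assume $C=\mathrm{Stab}(X)^{\mathrm{red}}$ trivial.

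\emph{Jet schemes and the core argument.} For $n\geq0$ put $R_n=R/\mf m^{n+1}$ and let $\Pi_n=\mathrm{Res}_{R_n/\ov k}(\mc A\times_R R_n)$ be the $n$-th jet scheme of $\mc A$ (in the sense of the Greenberg functor): a smooth commutative group scheme over $\ov k$, defined over a finite subfield, with $\Pi_n(\ov k)=\mc A(R_n)$, $\Pi_0=\mc A_0$, and transition epimorphisms $\Pi_{n+1}\ra\Pi_n$ whose kernels are vector groups. Over $\ov k=\ov{\mb F_p}$ both the vector groups and $\mc A_0(\ov k)$ consist of torsion points, so each $\Pi_n(\ov k)$ is a torsion abelian group; moreover $\mc A(R)=\varprojlim_n\Pi_n(\ov k)$, the kernel of $\mc A(R)\ra\Pi_n(\ov k)$ being $\wh{\mc A}(\mf m^{n+1})$, on which $[\ell]$ acts bijectively for every prime $\ell\ne p$ (formal inverse function theorem), so reduction is injective on prime-to-$p$ torsion. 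Writing $\mc X_n=\mathrm{Res}_{R_n/\ov k}(\mc X\times_R R_n)\subseteq\Pi_n$ and using that the image of the finitely generated $\Ga$ in each $\Pi_n(\ov k)$ is finite, one checks that $\Ga'$ maps into $\Tor(\Pi_n(\ov k))$ for every $n$. Now let $S=\mc X(R)\cap\Ga'$ (dense in $X$), let $S_n$ be its image in $\mc X_n(\ov k)$, and let $\mc Z_n\subseteq\mc X_n$ be the Zariski closure of $S_n$; the $\mc Z_n$ form a compatible tower with $\varprojlim_n\mc Z_n(\ov k)\supseteq S$ still dense in $X$. The heart of the proof is to show that such a tower — whose $\ov k$-points at level $n$ are torsion points of $\Pi_n$ lying in $\Div^p$ of a \emph{fixed finite} subgroup, and which in the limit is as large as $X$ — can exist only when $\mc X$ is, after translating by an element of $\mc A(R)$, the base change to $R$ of an irreducible closed subscheme of $\mc A_0$ over $\ov k$. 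Granting this, that special fibre descends to a finite field $\mb F_q\subseteq\ov k$; putting back the quotient by $C$, the translation, and possibly a further finite-kernel isogeny coming from the auxiliary extension yields precisely the data of the definition of ``isotrivial up to quotient'' for some $X^{+Q}_{L'}$, and the $\ov k$-points of the descended subscheme — Zariski dense and torsion — supply the forbidden dense set of torsion points in $X^{+Q}_{L'}$, the contradiction sought. (The Manin--Mumford conjecture, Theorem~\ref{MM}, is not used here; it enters only afterwards, when this statement is combined with Theorem~\ref{MM} to deduce the Mordell--Lang conjecture itself.)

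\emph{Main obstacle.} The whole weight of the argument lies in the core step of the third paragraph: deducing constancy of the family $\mc X/R$ (up to translation, after a finite base change) from the density, at every jet order, of a set of prime-to-$p$ division points of a finitely generated group. This is the genuine Mordell--Lang content, and I expect it to require a delicate induction — plausibly a descending induction on $\dim A$ in which one splits off sub-semi-abelian subvarieties and isotrivial factors, at each stage playing the vector-group kernels of $\Pi_{n+1}\ra\Pi_n$ against the torsion subgroup of $\Pi_n$ and against injectivity of reduction on prime-to-$p$ torsion. The secondary difficulties are routine but must be handled with care: the torus part of $A$, where $\mc A$ is not proper and the $\Pi_n$ are only quasi-affine over $\mc A_0$; reconciling ``$\mc A(R)=A(L)$'' with the use of a genuine semi-abelian scheme model rather than the full N\'eron model; and descending the auxiliary finite extension of $L$.
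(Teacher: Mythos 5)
Your overall shape (contraposition: density of $X\cap\Ga'$ should eventually produce a dense set of torsion points in some translate) is compatible with what the theorem requires, and your setup --- spreading out over the valuation ring and passing to Greenberg/jet schemes $\Pi_n$ with $\Pi_n(\ov k)=\mc A(R_n)$ --- matches the paper's Section 2. But the proposal has a genuine gap exactly where you yourself locate ``the whole weight of the argument'': the claim that a compatible tower of Zariski closures of the images of $\Ga'\cap\mc X(R)$ in the jet schemes, dense in the limit, forces $\mc X$ to be (up to translation and finite base change) a constant family. You give no argument for this beyond the expectation of ``a delicate induction on $\dim A$''; that claim is essentially the full Mordell--Lang statement over $L$, so granting it begs the question.

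The paper closes this gap by a mechanism your proposal does not contain. First, the group structure of $\Ga'$ is exploited: $\Ga'/p^\al\Ga'$ is finite, so up to finitely many translates $Q_i\in\Ga$ every point of $\Ga'\cap\mc X(S)$ lies in $p^\al\mc A(S)$; its jet therefore lies in the critical scheme $J^\al(\mc X^{+Q})\cap p^\al J^\al(\mc A)$, and its image under $\La_{\al,0}$ in the exceptional scheme $\Exc^\al(\mc X^{+Q})$, which is closed because the morphism $\up{p^\al}$ is finite for semi-abelian schemes. Second --- and this is the step with no counterpart in your sketch --- the hypothesis on torsion is converted into properness of these exceptional schemes via Theorem \ref{indefp}: a point of $\bigcap_n\Exc^n_0(\mc X)$ lifts to an indefinitely $p$-divisible point of $\mc X(S)$; a dense set of such points is killed by $Q(\tau)$ for a Weil polynomial $Q$ and a Frobenius lift $\tau$, so \cite[Theorem 3.5]{PinkRo} forces $X_{\ov L}$ to be isotrivial up to quotient and hence to have dense torsion, contradicting the hypothesis. (So, contrary to your parenthetical, a deep input of Manin--Mumford type is used \emph{inside} this theorem, not only afterwards.) Third, a compactness argument on $\wt{\mc A}(\wt S)$ makes the level $\al$ uniform in the translate $Q\in\Ga$ (Corollary \ref{Exc0}), after which $X\cap\Ga'$ sits in a finite union of proper closed subsets. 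Without a substitute for the second ingredient, your ``core step'' remains an assertion of the theorem itself rather than a proof of it.
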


In the first section, we will see how to pull-back any semi-abelian variety to one over such a field $L$. Let us see first how we can induce Theorem \ref{ML} from Theorem \ref{MM}. We will use many times the following lemma, which is an invariance property of the Mordell-Lang conjecture:
\begin{lem}
 Let $F'/F$ be an extension of algebraically closed fields. Theorem \ref{ML} holds for given $A,X,\Ga'$ if and only if it holds for $A_{F'},X_{F'},\Ga'_{F'}\subset A_{F'}(F')$.
\end{lem}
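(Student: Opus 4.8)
The plan is to reduce statement to the case where both $A$, $X$ and $\Ga$ are defined over an algebraically closed ground field, via a base change argument, and then invoke the invariance property just asserted. The key observation is that the statement of Theorem~\ref{ML} is symmetric under extension of the algebraically closed base field, while the two properties at stake --- Zariski density of $X\cap\Ga'$ in $X$, and isotriviality up to quotient --- are preserved in both directions. So the content of the lemma is really the compatibility of all the ingredients (stabilizer, $C=\mathrm{Stab}(X)^{\mathrm{red}}$, torsion, the prime-to-$p$ divisor group, the presentation of the semi-abelian variety, the isotriviality data) with flat base change between algebraically closed fields.

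First I would check that the formation of $C=\mathrm{Stab}(X)^{\mathrm{red}}$ commutes with the extension $F'/F$: the translation stabilizer is defined by a universal property (see \cite{SGA3b}), hence commutes with arbitrary base change, and taking the reduced subscheme commutes with base change to a field because $F'/F$ is a field extension (so $F'$ is geometrically reduced over $F$ is automatic; more simply $C_{F'}$ is still reduced as $F'$ is perfect --- we are in the algebraically closed setting). Consequently $(A/C)_{F'}=A_{F'}/C_{F'}$, and the formation of the quotient variety $X/C$ commutes with base change as well. Next I would record that $\Tor(A(F))$ and $\Div^p(\Ga)$ behave well: a point of $A_{F'}(F')$ is torsion iff it is killed by some $[m]$, a condition insensitive to which algebraically closed field contains it; and $\Div^p$ is defined by the same divisibility condition, so $\Div^p(\Ga)_{F'}\subseteq\Div^p(\Ga_{F'})$ with the reverse containment at the level of \emph{$F$-points versus $F'$-points} being exactly what the density statement needs to survive.

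The main step is the density transfer. For the ``only if'' direction one uses that $X_{F'}\cap\Ga'_{F'}$ maps onto (a dense subset related to) $X\cap\Ga'$ under the structure morphism $\Sp F'\to\Sp F$, so if $X\cap\Ga'$ is dense in $X$ then, since $X_{F'}\to X$ is faithfully flat and the closure commutes with such base change, $X_{F'}\cap\Ga'_{F'}$ is dense in $X_{F'}$; hence if Theorem~\ref{ML} holds over $F'$ we get that $X_{F'}$ is isotrivial up to quotient, and I then descend the isotriviality data back to $F$. For descent: the semi-abelian variety $B$, the isogeny $h$, the model $\mb B$ over $\mb F_q$, the subscheme $\mb X$, and the point $a\in(A/C)(F')$ are, a priori, only defined over $F'$; but $\mb B$ and $\mb X$ are already over the finite field $\mb F_q\subset F\subset F'$, and the remaining data ($B$, $h$, $a$, the identity $X/C=a+h_*(\mb X\times F')$) live in a finitely generated subextension of $F'/F$, which --- since $F$ is algebraically closed --- can be specialized back into $F$ without changing anything, using that the locus where the relevant identity of subschemes holds is closed and nonempty. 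For the ``if'' direction (Theorem~\ref{ML} over $F$ implies it over $F'$), one argues symmetrically: density over $F'$ forces density over $F$ because the image of a dense constructible set under $X_{F'}\to X$ is dense (here one may need to intersect with the generic fiber and use that $\Ga'$ generates, by finiteness of $\Ga$, a countable set, but density is a purely topological matter and passes down), so $X$ is isotrivial up to quotient over $F$, and isotriviality up to quotient is obviously stable under the base change $F'/F$.

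The hard part will be the density transfer in the direction ``density over $F'$ $\Rightarrow$ density over $F$'': a priori a point of $\Ga'_{F'}$ lands in $X_{F'}$ without its image (or rather, any $F$-point specializing it) landing in $X$, and $\Ga'$ itself does not change under base change only if one is careful that $\Ga$ was \emph{finitely generated} --- this is exactly where finite generation of $\Ga$ is used. The cleanest route is to note that $X_{F'}\cap\Ga'_{F'}$ is a subset of $(X\cap\Ga')_{F'}$ because a point of $A_{F'}(F')$ lying in the finitely generated group $\Ga'_{F'}=\Div^p(\Ga_{F'})$ and in the subvariety $X_{F'}$ must, since $\Ga\subset A(F)$ and $\Ga'$ is computed by divisibility inside $A(F)$, already be an $F$-point lying in $X$; then $(X\cap\Ga')_{F'}$ dense in $X_{F'}$ forces $X\cap\Ga'$ dense in $X$ by faithfully flat descent of schematic closures. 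I expect that carefully unwinding this containment --- i.e.\ checking $\Ga'_{F'}=(\Ga')_{F'}$ as subsets, which relies on $\Ga$ being finitely generated so that no ``new'' prime-to-$p$ roots appear over $F'$ that were not already over $F$ --- is the only genuinely non-formal point, everything else being a routine verification that the listed constructions commute with the base change of algebraically closed fields.
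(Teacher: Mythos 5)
The paper does not actually prove this lemma; it defers entirely to \cite[Lemma 3.4]{RosA}. Your proposal reconstructs what is essentially the standard argument: (a) the closure of a set of rational points commutes with the faithfully flat base change $\Sp F'\to\Sp F$, so density transfers in both directions once one knows the point set does not change; (b) $\mathrm{Stab}(X)$ and $C=\mathrm{Stab}(X)^{\mathrm{red}}$ commute with extension of algebraically closed (hence perfect) fields; (c) isotriviality up to quotient ascends by base change and descends by spreading out the data $(B,h,a)$ over a finitely generated $F$-subalgebra of $F'$ and specializing at an $F$-point, which exists because $F$ is algebraically closed. All of this is sound as a plan.

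One conceptual slip is worth correcting. You claim that the identity $\Div^p(\Ga)$ computed in $A(F)$ equals $\Div^p(\Ga)$ computed in $A_{F'}(F')$ ``relies on $\Ga$ being finitely generated so that no new prime-to-$p$ roots appear over $F'$.'' Finite generation has nothing to do with it: the correct reason is that $F$ is algebraically closed, so for $p\nmid m$ the fibre $[m]^{-1}(\gamma)$ of a point $\gamma\in A(F)$ is a finite $F$-scheme all of whose geometric points are already $F$-rational; hence any $x\in A_{F'}(F')$ with $[m]x\in\Ga$ lies in $A(F)$. (Indeed $\Div^p(\Ga)$ is typically not finitely generated, since it contains all prime-to-$p$ torsion.) Relatedly, your intermediate appeal to ``the image of a dense constructible set under $X_{F'}\to X$'' is not the right mechanism --- the set in question is a countable union of closed points, not constructible --- but this is superseded by the cleaner containment argument you give afterwards. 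Finally, in the descent of isotriviality, the locus in $\Sp R$ where the identity $X/C=a+h_*(\mb X\times_{\mb F_q}-)$ holds and where $h$ remains a finite-kernel homomorphism of semi-abelian varieties is a dense open (constructible) locus rather than a closed one; since $F$ is algebraically closed it still contains an $F$-point, so the conclusion stands.
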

\begin{proof}
See \cite[Lemma 3.4]{RosA}.
\end{proof}

 So if we have the hypothesis for Theorem \ref{ML} for given $F,A,X,\Ga$, we also have it for the pull-back of this data over $L$, then Theorem \ref{main} implies that we can apply Theorem \ref{MM} for some $L',A_{L'},X_{L'}^{+Q}$, and therefore also for the pull-back of this data to $\ov{L'}$, so by Theorem \ref{MM}, $X_{\ov{L'}}^{+Q}$ is isotrivial up to quotient in $A_{\ov{L'}}$, which is equivalent by the invariance lemma to $X_{\ov L}$ isotrivial up to quotient in $A_{\ov L}$. Using the invariance lemma once again allows us to have the same result over $F$, therefore Theorem \ref{MM} implies Theorem \ref{ML}.

We will first use fields of formal power series to reduce the problem to fields of transcendance degree $1$. This part goes smoothly as long as we choose carefully our field extensions, in order not to change too much the variety. This will be dealt with in the first section, along with the definition and some properties of semi-abelian schemes.

Then we build Jet schemes over complete rings (in Section 2) in order to get subschemes of $X$ in which all points of $\Ga'\cap X$ must be (if we allow translation for $X$ and taking subgroup of $\Ga'$ of finite index). Finally we prove that at least one of these subschemes cannot be $X$ itself, thus proving that $X\cap \Ga'$ is not dense in $X$. This last result , which will be dealt with in section 3, will be given a different proof than the analog result in \cite[\S 4]{RosA}, to give a different point of view on the matter.

\section{Models of semi-abelian varieties}
\hspace \parindent In this section, we will show that for any semi-abelian variety $A$ over a finitely generated field $F$ of characteristic $p>0$, there is a field extension $K/F$, where $K$ is a complete local field, such that $A$ has a nice model ${\mc A}\to \Sp \mc O_K$. Furthermore, we will see that some additional data in $A$ defined over $F$ extend to $\wt{\mc A}$.

We first need to define what a nice model for $A$ can be, and give some of its properties.

\subsection{Semi-abelian schemes with good reduction}
\begin{defi}
 For a given irreducible base scheme $S$, a semi-abelian $S$-scheme with good reduction is a group $S$-scheme $\mc A\to S$ which fits in an exact sequence of group $S$-schemes
\[0\to \mc T\to \mc A\to \mc A_0\to 0\]
with $\mc A_0$ an abelian scheme, and $\mc T$ a torus scheme (\emph{ie} $X$ is locally isomorphic to $\mb G_m^d$ for the fppf topology; in particular, for $x\in S$, $\mc T\tens \kappa(x)$ is a torus over $\kappa(x)$). 

Let $K$ be the function field of $S$. We say that a semi-abelian variety $A\to\Sp K$ has a model of good reduction over $S$ if it has a presentation $(T,A_0)$ and a model $\mc A\to S$ which is a semi-abelian scheme of presentation $(\mc T,\mc A_0)$ with $\mc T$ and $\mc A_0$ models of $T$ and $A_0$ respectively.
\end{defi}

Unless explicitely said so, all of our semi-abelian schemes will have good reduction. Furthermore, we assume from now on that we are given a discre valuation ring $R$ of characteristic $p>0$ and residue field $k$, and put $S=\Sp R$.
\begin{prop}
 Let $A\to S$ be a semi-abelian scheme, and $n$ an integer. Then the multiplication-by-$n$ map $[n]_A$ is an isogeny of group schemes (\emph{ie} it is a flat and surjective finite morphism of group schemes). Furthermore, if $p$ does not divide $n$, then it is also \'etale.
\end{prop}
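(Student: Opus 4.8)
The strategy is to reduce the statement to known properties of tori and abelian schemes by exploiting the presentation $0\to\mc T\to\mc A\to\mc A_0\to 0$, and then to glue the conclusions using the snake lemma / fppf-descent. First I would record the two classical inputs: on an abelian scheme $\mc A_0\to S$, the map $[n]_{\mc A_0}$ is a finite flat surjective morphism of group schemes, étale when $n$ is prime to the residue characteristic (this is in Mumford/GIT or \cite[Exp.~VIII]{SGA3b}); on a torus scheme $\mc T\to S$, after passing to an fppf cover where $\mc T\simeq\mb G_{m,S'}^d$, the map $[n]$ is visibly the $d$-fold product of $x\mapsto x^n$ on $\mb G_m$, which is finite flat surjective of degree $n^d$, and étale precisely when $n$ is invertible on the base. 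Since finiteness, flatness, surjectivity and étaleness are all fppf-local on the base, $[n]_{\mc T}$ has these properties on $S$ itself.

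Next I would assemble these via the commutative diagram with exact rows
\[
\begin{CD}
0 @>>> \mc T @>>> \mc A @>>> \mc A_0 @>>> 0\\
@. @V{[n]_{\mc T}}VV @V{[n]_{\mc A}}VV @V{[n]_{\mc A_0}}VV @.\\
0 @>>> \mc T @>>> \mc A @>>> \mc A_0 @>>> 0.
\end{CD}
\]
Surjectivity of $[n]_{\mc A}$ (as a morphism of fppf sheaves, hence as a morphism of schemes) follows from the five-lemma-type argument applied to the long exact sequence of fppf cohomology, using that $[n]_{\mc T}$ and $[n]_{\mc A_0}$ are epimorphisms. For flatness I would use the local criterion: $\mc A\to\mc A$ is flat iff it is so after the faithfully flat base change $\mc A_0\to S$ coming from… more directly, $\mc A\to\mc A_0$ is an fppf torsor under $\mc T$, hence faithfully flat, so flatness of $[n]_{\mc A}$ can be checked fibrewise over $\mc A_0$, where it becomes flatness of $[n]_{\mc T}$ after translation; alternatively one checks $[n]_{\mc A}$ is quasi-finite and $\mc A$ is flat over $S$ with fibres where $[n]$ is flat, and concludes by the fibrewise flatness criterion (EGA IV 11.3.10). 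Finiteness then follows from "quasi-finite + proper is finite" — but $\mc A$ is only proper over $S$ in the abelian case, so instead I would argue finiteness directly: the kernel $\mc A[n]$ is a closed subgroup scheme sitting in $0\to\mc T[n]\to\mc A[n]\to\mc A_0[n]\to 0$ with $\mc T[n]$ and $\mc A_0[n]$ finite flat over $S$, hence $\mc A[n]$ is finite flat over $S$, and $[n]_{\mc A}$, being a flat morphism of group schemes with finite flat kernel, is itself finite flat (it is the quotient map, so $\mc A\simeq \mc A/\mc A[n]$ via $[n]$, a finite flat quotient).

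Finally, for the étale claim when $p\nmid n$: étaleness is equivalent to flatness plus unramifiedness, and unramifiedness can be checked on the kernel, i.e.\ one needs $\mc A[n]\to S$ to be étale. From the exact sequence $0\to\mc T[n]\to\mc A[n]\to\mc A_0[n]\to 0$, both outer terms are finite étale over $S$ when $n$ is invertible in $R$ (true since $\cara R=p$ and $p\nmid n$ forces $n\in R^\times$), so the extension $\mc A[n]$ is finite étale as well; hence $[n]_{\mc A}$ is étale. The main obstacle I anticipate is the finiteness assertion in the non-proper (genuinely semi-abelian) case, where one cannot invoke properness and must instead argue via the finite flat kernel and identify $[n]_{\mc A}$ with the fppf quotient $\mc A\to\mc A/\mc A[n]$; making this identification clean — i.e.\ that $[n]$ really exhibits $\mc A$ as a torsor over itself under $\mc A[n]$ and that such quotients are representable and finite flat — is the one point that needs care, and everything else is a routine diagram chase plus fppf descent of the relevant morphism properties.
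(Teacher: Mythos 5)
Your proof is correct and rests on the same key decomposition as the paper's: everything is reduced to the kernel $\mc A[n]$, controlled via the exact sequence $0\to\mc T[n]\to\mc A[n]\to\mc A_0[n]\to 0$ together with fppf descent to $\mb G_m^d$ for the torus part. The difference is in how much you prove by hand. The paper invokes \cite[$VI_B$]{SGA3a} as a black box for the implication ``finite kernel $\Rightarrow$ flat, surjective, finite homomorphism,'' whereas you reconstruct that implication yourself (fppf five lemma for surjectivity, the fibrewise criterion for flatness, and the identification of $[n]_{\mc A}$ with a torsor under the finite flat group scheme $\mc A[n]$ for finiteness); you correctly isolate the torsor/quotient step as the delicate point, and that is precisely what the SGA3 reference supplies, so your longer route is sound but buys nothing the citation does not. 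For \'etaleness the two arguments genuinely diverge: the paper reduces to the special fibre over $\Sp k$ via \cite[$VI_B$, Prop 2.5]{SGA3a} and only then looks at $\wt A[n]$, while you check directly over $S$ that $\mc T[n]$ and $\mc A_0[n]$ are finite \'etale because $n\in R^\times$, so that $\mc A[n]$ is \'etale and hence so is the flat map $[n]_{\mc A}$. Your version is arguably cleaner here, since it avoids the extra reduction step and uses the flatness already established; the paper's version has the advantage of needing \'etaleness of $n$-torsion only for varieties over a field.
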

\begin{proof}
 For the first part, using \cite[$VI_B$]{SGA3a}, it is enough to prove that $\mc A[n]:=\ker[n]_A$ is a finite group scheme over $S$. Since we have a presentation $(\mc T,\mc A_0)$ of $\mc A$ over $S$, supposing $\mc T[n]$ is proper hence finite, the kernel of $[n]_A$ is an extension of a finite group scheme (namely $\mc A_0[n]$) by another one, so it must also be finite. But properness for $\mc T[n]$ is the same, by fppf descent, as properness for $\mb G_m^d[n]$ which is an obvious calculation.
 To prove \'etaleness when $n$ is prime to $p$, it is enough to prove it for the special fiber $\wt A\to \Sp k$ (see \cite[$VI_B$, Prop 2.5]{SGA3a}), and even to prove that $\wt A[n]\to \Sp k$ is \'etale. But we have an exact sequence of finite $k$-scheme
 \[0\to \wt T[n]\to \wt A[n]\to \wt A_0[n]\to 0;\]
 since for $n$ prime to $p$, $\wt T[n]$ and $\wt A_0[n]$ are \'etale (because multiplication by $n$ is \'etale on tori and abelian varieties), so is $\wt A[n]$ (see \cite[IV,\S4,(4.45)]{Moon}).
\end{proof}

\begin{cor}\label{zeta}
 In the previous setting, if $R$ is strictly henselian, $\Div^p(A(S))$ and $A(S)$ are equal as subsets of $A(K^\sep)$, so that there is a map
 \[\zeta:\Div^p(A(S))\to A(S)\]
such that every $x\in\Div^p(A(S))$ seen as a morphism $x:\Sp K^\sep\to A$ extend to $\zeta(x):S\to A$.
\end{cor}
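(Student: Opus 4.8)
The corollary has two parts: first, the equality of subsets $\Div^p(A(S)) = A(S)$ inside $A(K^{\sep})$; second, the existence of the extension map $\zeta$. The plan is to derive both from the previous proposition together with the strict henselian hypothesis on $R$. The key point is that a point $x \in \Div^p(A(S))$, viewed inside $A(K^{\sep})$, satisfies $[m]_{\mc A} x \in A(S)$ for some $m$ prime to $p$; we want to conclude $x$ itself comes from $A(S)$.

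**Step 1: Extending over the punctured spectrum is automatic via properness or finiteness arguments, but the real input is étaleness.** By the proposition, $[m]_{\mc A} \colon \mc A \to \mc A$ is finite étale when $p \nmid m$. Let $y = [m]_{\mc A} x \in A(S)$, a section $y \colon S \to \mc A$. Form the fiber product $\mc A \times_{[m]_{\mc A}, \mc A, y} S$; this is a finite étale $S$-scheme, and the point $x \colon \Sp K^{\sep} \to \mc A$ lifting $y$ gives a $K^{\sep}$-point of this finite étale $S$-scheme. Now I would invoke the strict henselian hypothesis: over a strictly henselian local ring, every finite étale scheme is a finite disjoint union of copies of $S$ (this is essentially the definition, since the residue field is separably closed and $R$ is henselian — see e.g. EGA IV or SGA 1). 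Hence every $K^{\sep}$-point (equivalently, every point over the generic point, or over the separable closure of the residue field) extends uniquely to an $S$-point. This produces the desired lift $\tilde x \colon S \to \mc A$ with $\tilde x|_{\Sp K^{\sep}} = x$, and since $A(S) \hookrightarrow A(K^{\sep})$ is injective (as $S$ is reduced irreducible with generic point $\Sp K$, and $\mc A \to S$ is separated), this shows $x \in A(S)$, i.e. $\Div^p(A(S)) \subseteq A(S)$. The reverse inclusion is trivial (take $m = 1$), giving the claimed equality.

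**Step 2: Defining $\zeta$.** Given the equality of subsets, define $\zeta$ to be simply the identification: for $x \in \Div^p(A(S))$, its image in $A(K^{\sep})$ lies in $A(S)$, and $\zeta(x)$ is the corresponding element, i.e. the unique section $S \to \mc A$ restricting to $x$. Uniqueness of the extension (from separatedness of $\mc A/S$ and density of $\Sp K$ in $S$) makes $\zeta$ well-defined; functoriality/compatibility with the group structure is then formal since everything is happening inside the single group $A(K^{\sep})$.

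**Main obstacle.** The only genuine content is the passage from a $K^{\sep}$-point of a finite étale $S$-scheme to an $S$-point, which is exactly where "strictly henselian" is used — without it (e.g. over a general DVR) the torsor $\mc A \times_{[m], \mc A, y} S$ could be a nontrivial finite étale cover with no section, and the statement would fail. So I expect the write-up to hinge on correctly citing the structure of finite étale algebras over a strictly henselian ring, and on checking that the relevant fiber product is indeed finite étale over all of $S$ (which is immediate from the proposition, since finite étale morphisms are stable under base change). One should also be slightly careful that $x$ is an honest $K^{\sep}$-point and that $K^{\sep}$ contains the separable closure of the residue field appropriately, but for a DVR with separably closed residue field this is standard.
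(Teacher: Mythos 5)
Your proposal is correct and follows essentially the same route as the paper: both form the pullback of the section $y=[m]x$ along the finite \'etale isogeny $[m]_{\mc A}$ (the paper's $[n]^*\wt P$ is exactly your fiber product $\mc A\times_{[m],\mc A,y}S$), invoke the fact that a finite \'etale scheme over a strictly henselian local ring splits as a disjoint union of copies of $S$, and conclude that the $K^\sep$-point $x$ singles out one copy, i.e.\ an $S$-section $\zeta(x)$. Your added remark on the injectivity of $A(S)\to A(K^\sep)$ via separatedness is a harmless (and correct) elaboration of a point the paper leaves implicit.
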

\begin{proof}
 Let $x\in \Div^p(A(S))$, $n$ (prime to $p$) and $P$ such that $[n]x$ is the generic point of $P\in A(S)$. This setting implies that $x$ (seen as a morphism $K^\sep\to A$) factors through $\Sp K^\sep\to[n]^*\wt P\inj A$, where $\wt P$ is the schematic image of $P$ in $A$, which is isomorphic to $S$. Since $S$ is the spectrum of a strictly henselian ring and $[n]$ is \'etale, $[n]^*\wt P$ is a product of (a finite number of) copies of $S$, so the image of the only point in $\Sp K^\sep$ is necessarily the generic point of one of these copies which is an $S$-subscheme of $A$ isomorphic to $S$, \emph{ie} an $S$-section $\zeta(x)\in A(S)$. It is now clear from the construction that $\zeta$ has the desired property.
\end{proof}

\subsection{Integral model of semi-abelian varieties}
\hspace \parindent First, we will need to show that any semi-abelian variety over a field of characteristic $p$ can be defined as a variety over a field of formal power series in one variable $K$, with a model of good reduction over its ring of integers. The next proposition is a close adaptation of \cite[Proposition 6]{RosNot}:

\begin{prop}\label{epsilon}
 Let $A$ be a semi-abelian variety over a field $F$ that is a finitely generated extension of $\mb F_p$. Then there is a finite extension $K$ of $\mb F_p((t))$, such that we have a morphism $F\to K$ and the semi-abelian variety $A_K$ has a model of good reduction over $\mc O_K$, where $\mc O_K$ is the valuation ring of $K$. 

 Furthermore, for a given irreducible closed subvariety $X\inj A$ and given points $\ga_1,\dots,\ga_k\in A(F)$, those can be extended to a subscheme $\mc X\inj \mc A$ and integral points $\wt\ga_1,\dots,\wt \ga_k \in\mc A(\mc O_K)$.
\end{prop}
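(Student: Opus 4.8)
The plan is to combine a spreading-out argument for all the data with the construction of a well-chosen formal arc into an integral model over $\mb F_p$, in the spirit of \cite[Proposition 6]{RosNot}.

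\textbf{Step 1 (spreading out).} Write $F=\Frac(R_0)$ for some finitely generated $\mb F_p$-subalgebra $R_0\subset F$ that is a domain. After replacing $R_0$ by a localization $R_0[1/g]$, standard limit arguments let us assume that $\Sp R_0$ is regular (hence smooth over the perfect field $\mb F_p$), and that the whole data descends: the semi-abelian variety $A$ together with a presentation $(T,A_0)$ spreads out to a semi-abelian $R_0$-scheme $\mc A'\to\Sp R_0$ with good reduction (the abelian part of the presentation being the spreading-out of $A_0$ into an abelian scheme, and the toric part that of $T$, which splits over a finite étale cover of $\Sp R_0$ after shrinking; the short exact sequence stays exact after shrinking); the closed subvariety $X\inj A$ spreads out to a closed subscheme $\mc X'\inj\mc A'$; and each $\ga_i\in A(F)$ spreads out to a section $\ga_i'\in\mc A'(R_0)$. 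All the original data over $F$ is recovered by restriction to the generic point of $\Sp R_0$.

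\textbf{Step 2 (a dominant formal arc).} Being finitely generated over the finite field $\mb F_p$, the ring $R_0$ is countable; enumerate $R_0\setminus\{0\}=\{f_1,f_2,\dots\}$. Choose a closed point $s\in\Sp R_0$, with maximal ideal $\mf m\subset R_0$; its residue field is a finite field $\mb F_{q'}$, and since $\Sp R_0$ is regular of dimension $d=\mathrm{trdeg}_{\mb F_p}F$ we have $\wh{R_0}_{\mf m}\simeq\mb F_{q'}[[u_1,\dots,u_d]]$, with $R_0\inj\wh{R_0}_{\mf m}$ by Krull. The crux is to produce a continuous $\mb F_{q'}$-algebra map $\wh{R_0}_{\mf m}\lra\mc O_K:=\mb F_{q''}[[t]]$, for some finite extension $\mb F_{q''}$ of $\mb F_{q'}$ — equivalently, a formal arc $u_i\mapsto\phi_i(t)\in t\,\mb F_{q''}[[t]]$ — whose composite with $R_0\inj\wh{R_0}_{\mf m}$ is \emph{injective}. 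The space of formal arcs $(\phi_1,\dots,\phi_d)$ is an inverse limit of finite jet spaces, hence a compact, therefore Baire, space. For each $j$ the set $Z_j=\{\phi:\ f_j\mapsto 0\}$ is closed, and one checks it is nowhere dense: given any finite truncation of $\phi$, it can be continued — after enlarging $\mb F_{q'}$ if necessary — by a monomial-type tail $u_i\mapsto(\text{truncation})+t^{e_i}$ with widely spaced exponents $e_i$, so that the image of $f_j$ acquires a visibly nonzero leading term. By Baire's theorem $\bigcap_j(\text{arc space}\setminus Z_j)\neq\varnothing$, and any arc in it works.

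\textbf{Step 3 (conclusion).} The injection $R_0\inj\mc O_K$ extends to an embedding $F=\Frac(R_0)\inj K:=\Frac(\mc O_K)=\mb F_{q''}((t))$, and $K$ is a finite extension of $\mb F_p((t))$. Moreover $\Sp\mc O_K\to\Sp R_0$ is a local dominant morphism (the arc has values in the maximal ideal, and injectivity on $R_0$ means the generic point maps to the generic point). Base-changing the data of Step 1 along this morphism yields a semi-abelian $\mc O_K$-scheme $\mc A\to\Sp\mc O_K$ with good reduction — good reduction and the presentation $(\mc T,\mc A_0)$, being given by an abelian scheme, a torus scheme and an exact sequence, are all stable under base change — together with a closed subscheme $\mc X\inj\mc A$ and sections $\wt\ga_1,\dots,\wt\ga_k\in\mc A(\mc O_K)$. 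Its generic fibre is $A_K$, with $X_K\inj A_K$ and $(\ga_i)_K$ recovering $X$ and the $\ga_i$ under $F\inj K$, which is exactly the statement.

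\textbf{Main obstacle.} Everything outside Step 2 is routine spreading-out and base change. The real difficulty is arithmetic and lies in Step 2: over a finite residue field a "generic" linear arc can be degenerate, since a nonzero polynomial may vanish at all $\mb F_{q'}$-rational points, so one cannot just take a generic line and must instead either enlarge the residue field or run the Baire-category argument on the whole arc space, where verifying that each $Z_j$ is nowhere dense is the point needing care.
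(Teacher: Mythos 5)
Your overall architecture --- spread all the data out over an affine model $\Sp R_0$ of $F$, then specialize along a formal arc into a power-series ring --- is the same as the paper's, and Steps 1 and 3 are routine and correct. The real divergence is Step 2, and that is also where your write-up has a gap. The paper never tries to embed $R_0$ itself into a power-series ring: it writes $F$ as a finite extension of a purely transcendental field $\mb F_p(x)(X_1,\dots,X_d)$, takes $V\to\mb A^d_U$ to be the normalization of $\mb A^d_U$ in $F$ (finite and surjective), and only needs the arc $\wh U=\Sp\mb F_p[[t]]\to\mb A^d_U$ to be dominant, i.e.\ an injection of the \emph{polynomial} ring $\mb F_p[x][X_1,\dots,X_d]$ into $\mb F_p[[t]]$. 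That is immediate from the existence of $d$ elements of $t\,\mb F_p[[t]]$ algebraically independent over $\mb F_p(t)$. The finite part of $F$ over the transcendence basis is then absorbed by pulling the finite cover $B\to W$ back along the arc and letting $K$ be the function field of a dominating irreducible component --- this is precisely why the statement only asserts that $K$ is a \emph{finite extension} of $\mb F_p((t))$. The price is the bookkeeping with $f^{-1}(W)=B$ and the choice of the closed point inside $W$.

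Your route demands strictly more: an injective local map $R_0\to\mb F_{q''}[[t]]$, i.e.\ an arc of $\Sp\mb F_{q'}[[u_1,\dots,u_d]]$ killing no nonzero element of $R_0$. The Baire framework is fine ($Z_j$ closed, countably many $j$), but the assertion that each $Z_j$ is nowhere dense is exactly the crux, and ``a monomial-type tail with widely spaced exponents makes the leading term visibly nonzero'' is not a proof: once $f_j$ is viewed in the completion it is a \emph{power series} in the $u_i$, the truncation $c$ supplies the low-order terms, and cancellation among the infinitely many monomials of $f_j$ is not controlled merely by spacing exponents. The claim is nevertheless true; one way to prove it is to expand $f(c_1+t^Nv_1,\dots,c_d+t^Nv_d)$ in formal variables $v_i$: since $\binom{\al'_i}{\al_i}=0$ unless $\al'_i\geq\al_i$ and each $c_i$ lies in $(t)$, the coefficient of $v^{\al}$ equals $t^{N\lvert\al\rvert}\bigl(a_{\al}+O(t)\bigr)$, hence is nonzero whenever the coefficient $a_{\al}$ of $f$ is; the substitution is therefore injective, and specializing the $v_i$ to constants in a sufficiently large finite extension of the residue field (so that a nonzero polynomial coefficient of some $t^M$ survives) yields an extension of the truncation avoiding $Z_j$. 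With that computation supplied, your proof is complete and arguably cleaner than the paper's (and yields the stronger conclusion that $K$ may be taken unramified over $\mb F_p((t))$); without it, Step 2 is an assertion rather than an argument.
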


\begin{proof}
 Assuming $F/\mb F_p$ is transcendental (if it's algebraic, the result is obvious), we can find a smooth curve $U$ over $\mb F_p$ such that $F/K(U):=K_0$ is finitly generated. We choose $U=\Sp \mb F_p[x]\simeq \mb A^1_{\mb F_p}$ with $x\in F$ transcendent over $\mb F_p$, so that $U(\mb F_p)\neq\varnothing$. Using a transcendence basis, we have a finite map $\Sp F\to\Sp K_0\left(X_1,\dots,X_d\right)$, where $d$ is the transcendence degree of $F$ over $K_0$. We define $f:V\to \mb A^d_U$ to be the normalisation of $\mb A^d_U$ in $F$. This scheme is integral, normal, and has $F$ as field of rational functions. Also notice that this morphism is finite and surjective \cite[Proposition 1.1]{MilEC}. In this setting, there is an open $B\subset V$ such that $A\to F$ has a model $\mc A_B$ over $B$ (which is not necessarily a group scheme). By restricting $B$, we can also assume the following properties:
\begin{itemize}
 \item $\mc A_B$ is a group scheme with a presentation 
 \[0\to\mc T_B\to \mc A_B\to \mc A_{0,B}\to 0,\]
 where $\mc T_B$ and $\mc A_{0,B}$ are group schemes model of $T$ and $A_0$ respectively over $B$, for some presentation $(T,A_0)$ of $A$ (all the group axioms are satisfied on an open subset if they are satisfied on the generic point of $B$);
 \item $\mc A_{0,B}$ is a model of good reduction for $A_0$ (an abelian variety has good reduction on a non empty open subset);
 \item $\mc T_B$ is a torus $B$-scheme;
 \item $X$ is the generic fiber of a flat irreducible closed subscheme $\mc X_B\inj\mc A_B$ ;
 \item for $i=1,\dots,k$, $\ga_i$ extends to a non-vanishing section in $\mc A_B(B)$;
 \item $f(B):=W$ is open and $f\inv(W)=B$ (this is a rather technical condition).
\end{itemize}

 Thus we have the following cartesian square:
\[\xymatrix{
\Sp F					\ar[r] \ar[d] \cart	&	B 	\ar[d]\\
\Sp K_0\left(X_0,\dots,X_d\right)	\ar[r]		&	W.}
\]
 Now, since $\mb A^d_U(K_0)$ is dense in $\mb A^d_U$ and $W$ is a non-empty open subset of $\mb A^d_U$, $W(K_0)$ is not empty. From this, we can say that for almost all closed points $u\in U$, the fiber $W_u$ is also not empty. Pick such a point $u$, defined over $\mb F_p$, and $P\in W_u(\mb F_p)$ (which is not empty because W is a non-empty open subset of the affine space): using the decomposition $\mb A^d_U=\mb A^d_{\mb F_p}\times_{\mb F_p} U$, we get $(a_1,\dots,a_d)$ as $\mb F_p$-coordinates of the first coordinate of $P$.

 Let $\wh U$ be the completion of the scheme $U$ along the closed subscheme $u$. We define a morphism $\wh U\to \mb A^d_{\mb F_p}$ as follow: using the fact that $\wh U\simeq \Sp \mb F_p[[t]]$, to give a morphism of schemes $\wh U\to A^d_{\mb F_p}$ is the same as giving a morphism of $\mb F_p$-algebras $\mb F_p[X_1,\dots,X_d]\to \mb F_p[[t]]$. Choosing $(x_1,\dots,x_d)$ algebraically independents elements of $\mb F_p[[t]]$ as images of the $X_i$'s makes this morphism injective, and the corresponding scheme morphism dominant (we can, because $\mb F_p((t))/\mb F_p$ has infinite transcendence degree, and if some $x_i$ is not in $\mb F_p[[t]]$, then we can replace it by its inverse without changing the independence property). Choosing the $x_i$'s such as $x_i=a_i$ mod $t$ makes the morphism of schemes send the closed point to $(a_1,\dots,a_d)$ (if some $x_i$ doesn't satisfy this property, we can translate it by a scalar in $\mb F_p$ to obtain it without interfering with the independence or the integral properties). From this morphism $\wh U\to \mb A^d_{\mb F_p}$ and the natural morphism $\wh U\to U$, we get a morphism $e:\wh U\to\mb A^d_U$, which sends the generic point of $\wh U$ to the generic point of $\mb A^d_U$, and sends the closed point of $\wh U$ to $P\in W_u(\mb F_p)$. Hence $e\inv(W)=\wh U$, and we can draw another cartesian square:
\[\xymatrix{
W 		\ar[d] \cart	&	\wh U\simeq\Sp \mb F_p[[t]]	\ar[l] \ar@{=}[d]\\
\mb A^d_U 		 	& 	\wh U \ar[l]}
\]

We can now pull back $B$ to a scheme $B_1\to \wh U$, as in the following cartesian square:
\[\xymatrix{
B	\ar[d] \cart	&	B_1 \ar[l] \ar[d]\\
W			&	\wh U, \ar[l]}
\]
so $B_1\to \wh U$ is also finite and surjective.

Since $B_1$ dominates $\wh U$, there is a reduced irreducible component $B_1'$ of $B_1$ which also dominates it. Since the morphism $B_1'\to\wh U$ is also finite, it correspond to an extension of integral rings. So if we take $K$ to be the function field of $B_1'$, which is a finite extension of $\mb F_p((t))$ via the isomorphism $\wh U\simeq \Sp \mb F_p[[t]]$, we get that $B_1'\simeq\Sp R$ with $R\subset\mc O_K$ a ring of integers in $K$, where $\mc O_K$ is the integral closure of $\mb F_p[[t]]$ in $K$. This gives us a morphism $\Sp \mc O_K\to B_1'\to B_1$. Hence we have the following commutative diagram, where the arrows not already defined are the obvious ones.

\[\xymatrix{
\Sp F \ar[r] \ar@{=>}[d] \cart & B \ar@{=>}[d] \cart & B_1 \ar[l] \ar@{=>}[d] & \Sp \mc O_K \ar@{-->}[l] \ar@{=>}@/_/[ld]& \Sp K \ar[l] \ar@{=>}[d]\\
\Sp K_0\left(X_0,\dots,X_d\right) \ar[r] & W \ar[d] \cart & \wh U \ar[l] \ar@{=}[d] & & \Sp \mb F_p((t)) \ar[ll]\\
 & \mb A^d_U & \wh U \ar[l] & & }
\]

\noindent\emph{$\xymatrix{\ar@{=>}[r] &}$: finite and dominant morphism}\\
\emph{$\xymatrix{\ar[r] &}$: dominant morphism}\\
\emph{$\xymatrix{\ar@{-->}[r] &} $: morphism}\\

Now the composition $\Sp K\to B$ sends the only point of $\Sp K$ to the generic point of $B$, since it is true for $\Sp K\to W$ and $B\to W$ is dominant and finite. So this gives a field extension $K/F$, because $F$ is the field of rational functions on $B$. From the semi-abelian scheme $\mc A_B/B$, we get another one $\wt{\mc A} \to \Sp \mc O_K$ by pulling back along the morphism in the diagram, which has $A_K$ as generic fiber. The subscheme $\mc X$ and the points $\wt \ga_1\dots\wt\ga_k$ come from the corresponding elements over $B$.
\end{proof}

\begin{rem}
 To make sure that $K$-rational points can be extended to $S$-sections at all times (in the manner of Corollary \ref{zeta}), we could use N\'eron  models for semi-abelian varieties, which exist (see \cite[Chap. 10]{BLRNeron}), but only as scheme locally of finite type over $S$, which brings some unwanted confusion in the rest of the proof.
\end{rem}

From now on, we will assume that $k$ is a finite field of characteristic $p$, $\ov k$ its algebraic closure, and $K$ a finite extension of $k((t))$ of residue field $k$ (this does not loose any generality, since we can always replace $k$ by a finite extension of it). We will use the following notations: $R=\mc O_K$, $\Rshc$ the completion of the strict henselisation of $R$ which is a local ring with maximal ideal $\mf m$, $\Rsh_n=\Rshc/\mf m^{n+1}$ and $\wt S,S,S_n$ the corresponding affine schemes. Also, we let $L$ be the fraction field of $\Rshc$ and $\ov L$ its algebraic closure.

For clarity, we will always use the written forms $\wt{\mc W},\mc W,W_n,W,W_{\ov L}$ respectively for a scheme over $\wt S$ and its pull-backs over $S,S_n,\Sp L$ and $\Sp \ov L$, unless explicitly said so.

\bigskip

In this setting and according to Proposition \ref{epsilon}, any semi-abelian variety $A_{\mathrm{init}}$ in characteristic $p$ has a model of good reduction $\wt{\mc A}\to\wt S$ for some $K$ as above (this is true if $A_{\mathrm{init}}$ is defined over a field finitely generated over $\mb F_p$ but any semi-abelian variety, being a scheme of finite type, has a model over such a field).

\section{Jet schemes over a complete ring}
\hspace \parindent We will define jet schemes in our setting, which is somewhat different from \cite{RosA}, since our base scheme is complete, hence not of finite type over the base field.

\bigskip

Let $R,\Rsh,\Rshc, L, S$ be as defined in the previous section, so that $L=\Frac(\Rshc)$ is a finite extension of $\ov k((t))$ and $S=\Sp \Rshc$. We denote $S\wh\times S$ the scheme $\Sp(\Rsh\wh\tens \Rsh)$, \emph{ie} the spectrum of the completion of $\Rsh\tens_{\ov k} \Rsh$ with respect to the ideal $\mf m\tens \Rsh + \Rsh \tens \mf m$, where $\mf m$ is the maximal ideal in $\Rsh$. There is a natural epimorphism $\Rsh\wh\tens \Rsh\to\Rshc$ which is the completion of the multiplication morphism in $\Rsh$; it induces a closed (diagonal) immersion $S\inj S\wh\times S$.

For $n$ a positive integer, we define $S_\n$ as the $n$-th neighborhood of $S$ seen as the diagonal subscheme $S\inj S\wh\times S$, so if $I$ is the definition ideal of this closed immmersion, 
\[S_\n:=\Sp R_\n:=\Sp R\wh\tens R/I^{n+1}.\]
 It is naturally equipped with the two projection 
 \[\pi_1^\n,\pi_2^\n:S_\n\subset S\wh\times S\rightrightarrows S.\]
 We consider this scheme as an $S$-scheme with $\pi_1^\n$ as structural morphism.

\begin{claim}\label{claim}
The immersion $S\to S\wh\times S$ is regular. More specifically, $I^n/I^{n+1}\simeq \Sym_{\Rshc}^n(I/I^2)$ when seen as a $\Rshc$ module via the isomorphism $\Rsh\wh\tens\Rsh/I\simeq\Rshc$.
\end{claim}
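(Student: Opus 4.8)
The plan is to reduce the statement to an explicit computation with formal power series in two variables and then invoke the standard fact that the associated graded ring of an ideal generated by a regular sequence is the polynomial algebra on the (free) conormal module. The first step is to pin down $\Rshc$: since $R=\mc O_K$ is a complete discrete valuation ring of residue characteristic $p$ and $\Rshc$ is the completion of its strict henselisation, $\Rshc$ is again a complete discrete valuation ring, with residue field the algebraic closure $\ov k$ of the finite field $k$; being of equal characteristic it admits a coefficient field (Cohen's structure theorem), so $\Rshc\simeq \ov k[[u]]$ for a uniformiser $u$.

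Next I would identify the structure ring $\mc O_{S\wh\times S}=\Rsh\wh\tens_{\ov k}\Rsh$. As the $\mf m$-adic completion of $\Rsh$ is $\Rshc$, this equals $\Rshc\wh\tens_{\ov k}\Rshc$, and the completion of $\ov k[[u_1]]\tens_{\ov k}\ov k[[u_2]]$ along the ideal generated by $u_1\tens 1$ and $1\tens u_2$ is $\ov k[[u_1,u_2]]$, because its successive quotients by the powers of that ideal coincide with those of $\ov k[u_1,u_2]$ along $(u_1,u_2)$. Thus $\mc O_{S\wh\times S}\simeq\ov k[[u_1,u_2]]$, a two-dimensional regular local domain, with $u_1=u\tens 1$ and $u_2=1\tens u$. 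Under this identification the multiplication epimorphism onto $\Rshc\simeq \ov k[[u]]$ sends $u_1,u_2\mapsto u$; setting $v:=u_1-u_2$ one has $\ov k[[u_1,u_2]]=\ov k[[v,u_2]]$ and the map becomes the quotient by $(v)$. Hence $I=(v)$ is principal, generated by the single nonzerodivisor $v$ (the ring being a domain and $v\ne 0$), so $S\inj S\wh\times S$ is a regular closed immersion of codimension one and $I/I^2=(v)/(v^2)\simeq\ov k[[u_2]]\simeq\Rshc$ is free of rank one.

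To conclude, multiplication by $v^n$ induces an isomorphism $\ov k[[v,u_2]]/(v)\xrightarrow{\ \sim\ }(v^n)/(v^{n+1})$, so $I^n/I^{n+1}$ is free of rank one over $\Rshc$, and $\Sym^n_{\Rshc}(I/I^2)$ is likewise free of rank one. The canonical map $\Sym^n_{\Rshc}(I/I^2)\to I^n/I^{n+1}$ given by multiplying representatives (well defined on the conormal algebra) sends the generator $\bar v^{\tens n}$ to the generator $\overline{v^{\,n}}$ and is therefore an isomorphism; this is exactly the instance, for a principal ideal generated by a nonzerodivisor, of the general identity $\bigoplus_{n\geq 0}I^n/I^{n+1}\simeq\Sym_{\mc O/I}(I/I^2)$ valid for any ideal generated by a regular sequence, which also yields the first (regularity) assertion of the claim.

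The only point requiring real care is the identification of the completed tensor product with the two-variable power series ring, i.e. that completion and tensor product commute here so as to produce a Noetherian regular local domain in which $u_1-u_2$ is a nonzerodivisor; everything after that is formal. Alternatively one may run this part without coordinates: identify $I/I^2$ with the module $\wh\Om^1_{\Rshc/\ov k}$ of continuous Kähler differentials, which for $\ov k[[u]]$ is free of rank one on $du$, deduce by Nakayama that $I$ is principal, observe it is generated by a nonzerodivisor because $\mc O_{S\wh\times S}$ is a domain, and then apply the local criterion for regular immersions — this is essentially the same argument repackaged.
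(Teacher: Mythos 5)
Your proof is correct and follows essentially the same route as the paper: identify $\Rsh\wh\tens\Rsh$ with a power series ring in two variables, observe that $I$ is principal, generated by the difference of the two variables (a nonzerodivisor since the ring is a domain), so the immersion is regular with $I/I^2$ free of rank one over $\Rshc$, and conclude via the standard identification of the associated graded algebra with the symmetric algebra on the conormal module. Your appeal to Cohen's structure theorem to write $\Rshc\simeq\ov k[[u]]$ outright is a slightly cleaner treatment of the general case than the paper's reduction to $\Rshc=\ov k[[t]]$ with the finite free $\ov k[[t]]$-algebra case left as ``proved likewise''.
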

\begin{proof}
Since $L$ is a finite extension of $\ov k((t))$, $\Rshc$ is a (finite) free $\ov k[[t]]$-algebra (because $\ov k[[t]]$ is complete, $\Rshc$ is finitely generated, and because it is torsion free and $\ov k[[t]]$ is a PID, it is free). In order to give a clearer proof, we will assume that $\Rshc=\ov k[[t]]$ (the general case is proved likewise). With this assumption, $\Rsh\wh\tens\Rsh=\ov k[[t,t']]$, together with the two maps $\Rshc\to\Rsh\wh\tens\Rsh : t\mapsto t$ and $t\mapsto t'$.

We can see that $I=(t-t')$, so $I/I^2\simeq (t-t')\cdot \ov k[[t]]$ is a free $\Rshc$-module, and the claim follows.
\end{proof}

Remember that, for a flat and finite  morphism $Y\to X$, and a $Y$-scheme $Z$, the Weil restriction functor $T/X \mapsto \mathrm{Hom}_Y(T\times_X Y,Z)$ is representable by an $X$-scheme, $\mf R_{Y/X}(Z)$, the Weil restriction of $Z$.

\begin{lem}
For $n\geq 0$, the $S$-scheme $S_\n$ is flat and finite.
\end{lem}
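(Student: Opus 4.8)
The plan is to establish the stronger fact that $R_\n$ is a \emph{free} $\Rshc$-module of finite rank for the module structure given by the structural morphism $\pi_1^\n$; finiteness and flatness of $S_\n\to S$ then follow immediately, a finite free module being both finite and flat.

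First I would equip $R_\n=R\wh\tens R/I^{n+1}$ with its $I$-adic filtration: letting $I_j$ denote the image of $I^j$ in $R_\n$, one has a finite chain of ideals
\[R_\n=I_0\supseteq I_1\supseteq\cdots\supseteq I_n\supseteq I_{n+1}=0,\]
and each $I_j$ is in particular an $\Rshc$-submodule of $R_\n$. The graded pieces are $I_j/I_{j+1}\simeq I^j/I^{j+1}$, which Claim \ref{claim} identifies with $\Sym^j_{\Rshc}(I/I^2)$; since $I/I^2$ is a free $\Rshc$-module (also asserted in the Claim, free of rank $1$ in the normalisation $\Rshc=\ov k[[t]]$, where $R\wh\tens R=\ov k[[t,t']]$ and $I=(t-t')$), each $I_j/I_{j+1}$ is free of finite rank over $\Rshc$. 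Here one should check that the $\Rshc$-module structure used in the Claim, coming from $\Rsh\wh\tens\Rsh/I\simeq\Rshc$, agrees with the one induced by $\pi_1^\n$ on the subquotients of $R_\n$; this holds because $a\tens1-1\tens a\in I$ for every $a\in\Rshc$, so that $\pi_1^\n$ and the diagonal quotient $R\wh\tens R\to\Rshc$ coincide modulo $I$, while the $R_\n$-action on $I_j/I_{j+1}$ factors through $R_\n/I_1=\Rshc$ because $I\cdot I^j\subseteq I^{j+1}$.

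It then remains to work down the filtration. Arguing by descending induction on $j$: $I_{n+1}=0$ is free, and if $I_{j+1}$ is free of finite rank over $\Rshc$, then the short exact sequence $0\to I_{j+1}\to I_j\to I_j/I_{j+1}\to0$ splits, as $I_j/I_{j+1}$ is free hence projective; thus $I_j\simeq I_{j+1}\oplus I_j/I_{j+1}$ is again free of finite rank. Taking $j=0$ shows that $R_\n$ is a finite free $\Rshc$-module (of rank $n+1$ in the normalised situation, with basis the classes of $1,(t'-t),\dots,(t'-t)^n$), so that $\pi_1^\n:S_\n\to S$ is finite and flat. This is in essence a straightforward induction along the filtration whose only genuine input is Claim \ref{claim}; the one point deserving attention is the bookkeeping of module structures, forced on us by the convention that $S_\n$ is regarded as an $S$-scheme via $\pi_1^\n$ rather than via the diagonal quotient.
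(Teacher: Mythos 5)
Your proof is correct and follows essentially the same route as the paper: filter $R_\n$ by the images of the powers of $I$, use the Claim to see that the graded pieces $\Sym^j(I/I^2)$ are finite free over $\Rshc$, and induct along the filtration. You prove the slightly stronger statement that $R_\n$ is finite free (the paper gets finiteness from the explicit decomposition $\bigoplus_{k\le n}(t-t')^k\,\ov k[[t]]$ and flatness from the extension $0\to I^n/I^{n+1}\to R_\n\to R_{(n-1)}\to 0$), and your remark reconciling the $\pi_1^\n$-module structure with the diagonal one is a worthwhile point of care that the paper leaves implicit.
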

\begin{proof}
Since we are dealing with affine schemes, we may work on the corresponding algebras. Assuming once again that $\Rshc=\ov k[[t]]$, we see that $\ov k[[t,t']]/I^{n+1}=\bigoplus_{k\leq n}(t-t')^k\cdot k[[t]]$ so the morphism is finite. 

For flatness, notice that for $n> 0$, we have exact sequences of $\Rshc$-modules
\[ 0\to I^n/I^{n+1}\to R_\n \to R_{(n-1)}\to 0.\]
Since $I^n/I^{n+1}$ is isomorphic as an $R_{(0)}$-module to $\Sym^n_{R_{(0)}}(I/I^2)$, it is a free $\Rshc$ module, so by induction on $n$ ($R_{(0)}\simeq \Rshc$ is obviously flat over $\Rshc$), $R_\n$ is flat over $\Rshc$.
\end{proof}

\begin{defi}
 Let $\mc W$ be an $S$-scheme. The $n$-th jet scheme of $\mc W$ over $S$ is the $S$-scheme 
\[J^n(\mc W):=\mf R_{S_\n/S}(\pi_2^{(n),*}\mc W).\]
\end{defi}

 This construction is covariant in $\mc W$, keeps closed immersions, smooth and \'etale morphisms.

 If $m\leq n$, the closed immersion $S_{(m)}\to S_\n$ induces a surjective morphism
\[\La_{n,m}:J^n(\mc W)\to J^m(\mc W).\]

 We also have canonical maps $\la_n:\mc W(S)\to J^n(\mc W)(S)$ which sends $f:S\to \mc W$ to $J^n(f):J^n(S)=S\to J^n(\mc W)$. As noted in \cite{RosA}, these maps do \emph{not} generally arise from morphisms $W\to J^n(W)$.
 
\begin{lem}
 If $\mc W$ is a smooth $S$-scheme, and $n\geq 1$, then $\La_{n,n-1}$ makes $J^n(\mc W)$ into a $J^{n-1}(\mc W)$-torsor under the vector bundle $\La_{n,0}^*(\Om_{W/S}^\vee)\tens\Sym^{n-1}(I/I^2)$.
\end{lem}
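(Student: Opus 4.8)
The plan is to unwind the definition of $J^n(\mc W)$ as a Weil restriction and to identify the fibres of $\La_{n,n-1}$ with a square-zero lifting problem for the smooth morphism $\mc W\to S$. First I would record the point-functor description: for an $S$-scheme $T$, a $T$-point of $J^n(\mc W)$ is the same thing as a morphism $g\colon T\times_S S_\n\to\mc W$ compatible with $\pi_2^\n$ (here $T\times_S S_\n$ is formed with $\pi_1^\n$ as structural morphism, and compatibility is with respect to that $S$-structure), and under this description $\La_{n,n-1}$ is just restriction along the closed immersion $T\times_S S_{(n-1)}\hookrightarrow T\times_S S_\n$. Consequently the fibre of $\La_{n,n-1}$ over a point $\bar g\in J^{n-1}(\mc W)(T)$ is exactly the set of liftings of $\bar g\colon T\times_S S_{(n-1)}\to\mc W$ along that immersion.

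Second, I would check that this immersion is a square-zero thickening and identify its ideal. From the exact sequences $0\to I^n/I^{n+1}\to R_\n\to R_{(n-1)}\to0$ used in the proof of the Lemma above, the ideal of $S_{(n-1)}$ in $S_\n$ is $I^n/I^{n+1}$; its square lies in $I^{2n}/I^{n+1}=0$ because $n\geq1$, and $I\cdot(I^n/I^{n+1})=0$, so this ideal is a module over $R_{(0)}\simeq\Rshc$ and, by the Claim, is isomorphic to $\Sym^n_{\Rshc}(I/I^2)$. Since both $S_\n$ and $S_{(n-1)}$ are flat over $S$, base change along $T\to S$ is exact, so $T\times_S S_{(n-1)}\hookrightarrow T\times_S S_\n$ is again a square-zero immersion, and its ideal $\mc I_T$ is the pullback of $I^n/I^{n+1}$: it is annihilated by the ideal of $T\times_S S_{(0)}$, hence may be viewed as the $\mc O_T$-module $\Sym^n_{\Rshc}(I/I^2)\tens_{\Rshc}\mc O_T$ pushed forward along $T=T\times_S S_{(0)}\hookrightarrow T\times_S S_{(n-1)}$.

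Third comes the main step. Because $\mc W\to S$ is smooth it is formally smooth, so for $T$ affine (whence $T\times_S S_\n$ is affine) every $\bar g$ lifts; in particular $\La_{n,n-1}$ is surjective, and applying this with $T$ an affine open $U\subset J^{n-1}(\mc W)$ and $\bar g$ the inclusion $U\hookrightarrow J^{n-1}(\mc W)$ shows that $\La_{n,n-1}$ admits sections Zariski-locally over $J^{n-1}(\mc W)$. Moreover the standard deformation theory of the smooth morphism $\mc W\to S$ says that the non-empty set of liftings of $\bar g$ is a torsor under $\Hom_{\mc O_{T\times_S S_{(n-1)}}}(\bar g^*\Om_{\mc W/S},\mc I_T)$; using that $\Om_{\mc W/S}$ is locally free and that $\mc I_T$ is supported on $T$, the projection formula along $T\hookrightarrow T\times_S S_{(n-1)}$ turns this into $H^0\bigl(T,\, g_0^*(\Om_{\mc W/S}^\vee)\tens\Sym^{n-1}(I/I^2)\bigr)$, where $g_0=\La_{n-1,0}(\bar g)\colon T\to\mc W$ and where I use that $\Sym^n(I/I^2)\simeq\Sym^{n-1}(I/I^2)$ since $I/I^2$ is free of rank one over $\Rshc$ by the Claim. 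This is precisely the functor of points over $J^{n-1}(\mc W)$ of the total space of the vector bundle $\La_{n-1,0}^*(\Om_{\mc W/S}^\vee)\tens\Sym^{n-1}(I/I^2)$, so $\La_{n,n-1}$ exhibits $J^n(\mc W)$ as a torsor under it; rewriting $\La_{n-1,0}^*$ as $\La_{n,0}^*$ after pullback along $\La_{n,n-1}$ gives the stated form.

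The part needing the most care — rather than any deep input — is the bookkeeping around the two $S$-structures on $T\times_S S_\n$ (via $\pi_1^\n$ versus $\pi_2^\n$), so that the deformation problem is posed over the correct base, together with the transport of the ideal $\mc I_T$ (which lives on $T\times_S S_\n$ but is already annihilated down to $T$) through the $\Hom$/projection-formula step, so that the resulting torsor is genuinely under a vector bundle pulled back from $\mc W$ rather than under some sheaf on a thicker base. Establishing representability and Zariski-local triviality of the lifting torsor over $J^{n-1}(\mc W)$, via affineness of $T\times_S S_\n$ for affine $T$, is the other routine-but-necessary point.
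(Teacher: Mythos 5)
Your proposal is correct and follows essentially the same route as the paper: unwind the Weil-restriction description of $J^n(\mc W)(T)$, identify the fibre of $\La_{n,n-1}$ over $\bar g$ with the set of liftings along the square-zero immersion $T\times_S S_{(n-1)}\hookrightarrow T\times_S S_\n$, and apply the deformation theory of smooth morphisms together with flatness of the $S_\n$ to compute the torsor structure. Your extra care about surjectivity/local triviality via formal smoothness and about the $\Sym^n$ versus $\Sym^{n-1}$ identification (both free of rank one) only tidies up points the paper leaves implicit.
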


\begin{proof}
 This is a close adaptation of \cite[Lemma 2.3]{RosA}: Let $T\to S$ be an affine $S$-scheme. By definition,
\[J^n(\mc W)(T)=\Hom_{S_{(n)}}(T\times_S S_n,\pi_2^{\n,*}\mc W),\]
and the immersion $S_{(n-1)}\inj S_{(n)}$ induces a maps
\[\Hom_{S_{(n)}}(T\times_S S_\n,\pi_2^{\n,*}\mc W)\to \Hom_{S_{(n-1)}}(T\times_S S_{(n-1)},\pi_2^{(n-1),*}\mc W) \]
which is the functorial description of the morphism $J^n(\mc W)\to J^{n+1}(\mc W)$.

Take $f\in \Hom_{S_{(n-1)}}(T\times_S S_{(n-1)},\pi_2^{(n-1),*}\mc W)$ and look at the fibre above $f$. Viewing $f$ as a $S_\n$-morphism $T\times_S S_{(n-1)}\to \pi_2^{(n),*}\mc W$ via the canonical immersions, the elements of the fibre are precisely the extensions of $f$ to $T\times_S S_\n$. Since the ideal of the immersion $S_{(n-1)}\inj S_\n$ has trivial square, we can apply the theory of infinitesimal extensions of morphisms of smooth schemes (see \cite[Exp. III, Prop 5]{SGA1}) to get that the fibre is an affine space under the group
\[H^0(T\times_S S_{(n-1)},f^*\Om^\vee_{\pi_2^{\n,*}\mc W/S_\n}\tens N)\]
where $N$ is the conormal bundle of the immersion $T\times S_{(n-1)}\inj T\times S_\n$. By flatness of the $S_\n$'s, this is the pull-back of the conormal bundle of $S_{(n-1)}\inj S_\n$, which is $\Sym^{n-1}(I/I^2)$ from the claim. Therefore, we have
\[H^0(T\times_S S_{(n-1)},f^*\Om^\vee_{\pi_2^{\n,*}\mc W/S_\n}\tens N)\simeq H^0(T,f_0^*\Om^\vee_{\mc W/S}\tens \Sym^{n-1}(I/I^2))\]
where $f_0$ is the pull-back of $f$ via $S\to S_\n$, and the lemma is proved.
\end{proof}

\subsection{Jet schemes of commutative group schemes}

\hspace \parindent Let $\mc C$ be a commutative group $S$-scheme. Then the jet schemes $J^n(\mc C)$ are natural group schemes too, and $\La_{n,m}^{\mc C}$ are group scheme morphisms.

\begin{lem}
 Let $n\geq 0$. There is an $S$-morphism $\up {p^n}:\mc C\to J^n(\mc C)$ such that $\La_{n,0}^{\mc C}\circ \up {p^n}=[p^n]_{\mc C}$ and $\up{p^n}\circ \La_{n,0}^{\mc C}=[p^n]_{J^n(\mc C/U)}$.
\end{lem}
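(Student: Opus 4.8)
The plan is to obtain $\up{p^n}$ through the Weil-restriction adjunction and to exploit the way the $p^n$-th power Frobenius flattens the difference between the two projections of $S_\n$. Write $\de\colon S=S_{(0)}\inj S_\n$ for the diagonal immersion (so $\La_{n,0}$ is the map it induces), and recall $J^n(\mc C)=\mf R_{S_\n/S}(\pi_2^{\n,*}\mc C)$. By adjunction, giving an $S$-morphism $\up{p^n}\colon\mc C\to J^n(\mc C)$ is the same as giving an $S_\n$-morphism $\phi\colon\pi_1^{\n,*}\mc C\to\pi_2^{\n,*}\mc C$; under this correspondence, on an affine test scheme $T$ the morphism $\up{p^n}$ sends $h\in\mc C(T)=\Hom_S(T,\mc C)$ to $\phi\circ\pi_1^{\n,*}(h)\in J^n(\mc C)(T)=\Hom_{S_\n}(\pi_1^{\n,*}T,\pi_2^{\n,*}\mc C)$, while $\La_{n,0}(g)=\de^*g$ for $g\in J^n(\mc C)(T)$ (where $\de^*\pi_1^{\n,*}T=T$ and $\de^*\pi_2^{\n,*}\mc C=\mc C$). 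Morally, $\up{p^n}$ takes the constant $n$-jet of $[p^n]_{\mc C}f$; this is algebraic, unlike $\la_n$, because iterating the torsor lemma above shows $\ker\La_{n,0}$ to be an $n$-step iterated extension of vector groups, hence killed by $[p^n]$, so no non-polynomial Hasse-derivative Taylor coefficients are needed.

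The technical heart, and the only place the hypothesis $p>0$ enters, is the Frobenius identity $F^n_S\circ\pi_1^\n=F^n_S\circ\pi_2^\n$ (as morphisms $S_\n\to S$, where $F^n_S$ is the $n$-fold absolute Frobenius), equivalently $F^n_{S_\n}=\de\circ F^n_S\circ\pi_i^\n$; it holds because $(a-b)^{p^n}=a^{p^n}-b^{p^n}$ and $I^{p^n}\subseteq I^{n+1}=0$ in $R_\n$ (as $p^n\geq n+1$). Since a fibre product depends only on its two structure morphisms, this forces the Frobenius twists $(\pi_1^{\n,*}\mc C)^{(p^n)}$ and $(\pi_2^{\n,*}\mc C)^{(p^n)}$ to be \emph{the very same} group $S_\n$-scheme, say $\mc D$. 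I then define $\phi:=V_n\circ F_n$, where $F_n=F^n_{\pi_1^{\n,*}\mc C/S_\n}\colon\pi_1^{\n,*}\mc C\to\mc D$ is the $n$-fold relative Frobenius and $V_n=V^n_{\pi_2^{\n,*}\mc C/S_\n}\colon\mc D\to\pi_2^{\n,*}\mc C$ the $n$-fold Verschiebung (both exist and are group homomorphisms since $\pi_i^{\n,*}\mc C\to S_\n$ is smooth), and I let $\up{p^n}$ be its adjoint. As relative Frobenius and Verschiebung commute with base change and $\de^*\mc D=\mc C^{(p^n)}$, one gets $\de^*\phi=V^n_{\mc C/S}\circ F^n_{\mc C/S}=[p^n]_{\mc C}$, which is exactly $\La_{n,0}\circ\up{p^n}=[p^n]_{\mc C}$.

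For the identity $\up{p^n}\circ\La_{n,0}=[p^n]_{J^n(\mc C)}$ I would compare both sides on functorial points. Fix $g\in J^n(\mc C)(T)=\Hom_{S_\n}(\pi_1^{\n,*}T,\pi_2^{\n,*}\mc C)$. Then $(\up{p^n}\circ\La_{n,0})(g)=\phi\circ\pi_1^{\n,*}(\de^*g)$, while $[p^n]_{J^n(\mc C)}(g)=[p^n]_{\pi_2^{\n,*}\mc C}\circ g=V_n\circ F^n_{\pi_2^{\n,*}\mc C/S_\n}\circ g$. The key observation is that the Frobenius twist $g^{(p^n)}$ of $g$ equals the twist $\bigl(\pi_1^{\n,*}(\de^*g)\bigr)^{(p^n)}$: indeed the twisting endofunctor $(-)^{(p^n)}$ of $S_\n$-schemes, being base change along $F^n_{S_\n}=\de\circ F^n_S\circ\pi_1^\n$, factors as $\pi_1^{\n,*}\circ(F^n_S)^*\circ\de^*$, and $\de^*\circ\pi_1^{\n,*}=\mathrm{id}$, so both twists equal $\pi_1^{\n,*}\bigl((F^n_S)^*(\de^*g)\bigr)$. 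Combining this with naturality of the relative Frobenius, namely $F^n_{\pi_2^{\n,*}\mc C/S_\n}\circ g=g^{(p^n)}\circ F^n_{\pi_1^{\n,*}T/S_\n}$ and $F_n\circ\pi_1^{\n,*}(\de^*g)=\bigl(\pi_1^{\n,*}(\de^*g)\bigr)^{(p^n)}\circ F^n_{\pi_1^{\n,*}T/S_\n}$, and post-composing with $V_n$, yields $[p^n]_{\pi_2^{\n,*}\mc C}\circ g=\phi\circ\pi_1^{\n,*}(\de^*g)$ for every $T$ and every $g$, hence the claimed equality of morphisms.

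The main obstacle is bookkeeping, not any deep input: one has to track carefully which $S$- or $S_\n$-structure each scheme carries under $\pi_1^\n$ versus $\pi_2^\n$, and to check that the Frobenius twists coincide on the nose rather than merely up to a canonical isomorphism --- which is precisely what $F^n_S\circ\pi_1^\n=F^n_S\circ\pi_2^\n$ delivers --- together with pinning down the functoriality of $\mf R_{S_\n/S}$ and of the maps $\La_{n,m}$. The case $n=0$ is degenerate ($\up{p^0}=\mathrm{id}$), and the reduction to $\Rshc=\ov k[[t]]$ used in earlier proofs is unnecessary here, since $I^{p^n}\subseteq I^{n+1}$ and the freshman's dream are valid over any $\mb F_p$-algebra.
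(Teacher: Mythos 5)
Your proof is correct, but it takes a genuinely different route from the paper's. The paper's argument is the ``soft'' one you mention only in passing as motivation: by the torsor lemma, $\ker\La_{n,0}$ is an iterated extension of vector groups $\epsilon^*(\Om^\vee_{\mc C/S})\tens\Sym^{k}(I/I^2)$, hence is killed by $[p^n]$; one then \emph{defines} $\up{p^n}(f):=[p^n]\wt f$ for an arbitrary lift $\wt f\in J^n(\mc C)(D)$ of $f\in\mc C(D)$, the point being that the ambiguity in the lift dies under $[p^n]$, and functoriality in $D$ gives the morphism. That proof is shorter but non-constructive and quietly uses that lifts exist (surjectivity of $J^n(\mc C)(D)\to\mc C(D)$ for affine $D$, which follows from the vector-group torsor structure). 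Your proof instead exhibits the adjoint $S_\n$-morphism $\phi=V_n\circ F_n\colon\pi_1^{\n,*}\mc C\to\pi_2^{\n,*}\mc C$ explicitly, the key input being that $F^n_S\circ\pi_1^\n=F^n_S\circ\pi_2^\n$ because $I^{p^n}\subseteq I^{n+1}=0$ and $p^n\geq n+1$, so the two $p^n$-Frobenius twists coincide on the nose and the Verschiebung of the one can be composed with the relative Frobenius of the other. This buys an explicit formula (useful, e.g., for the pointwise description of $\mu_n$ used later in the paper) and makes transparent that the existence of $\up{p^n}$ is a Frobenius phenomenon, at the cost of invoking Verschiebung for flat commutative group schemes and some twist bookkeeping; your verifications of both identities ($\de^*\phi=V^n_{\mc C/S}\circ F^n_{\mc C/S}=[p^n]_{\mc C}$, and the comparison of $g^{(p^n)}$ with $(\pi_1^{\n,*}\de^*g)^{(p^n)}$ via the factorization $F^n_{S_\n}=\de\circ F^n_S\circ\pi_1^\n$ together with $\de^*\circ\pi_1^{\n,*}=\mathrm{id}$) are sound. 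The only caveats are cosmetic: you should say explicitly that $\mc C$ is assumed smooth (or at least flat) over $S$, as the paper implicitly does, and note that the intermediate twists appearing inside $V_n$ need not be identified across the two projections --- only the top one is, which is all you use.
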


\begin{proof}
The kernel of $\La_{n,n-1}$ is $\epsilon^*(\Om^\vee_{\mc C/S})\tens\Sym^{n-1}(I/I^2))$, where $\epsilon$ is the zero section of the group scheme $\mc C$ (See \cite[\S2.2]{RosA}). In particular, it is isomorphic to $\mb G_{a,S}^d$ as a group, with $\mb G_a$ the additive group, so it is killed by the multiplication by $p$ in $J^n(\mc C)$.

Therefore, the kernel of $\La_{n,0}=\La_{n,n-1}\circ\cdots\circ\La_{1,0}$ is killed by $[p^n]_{J^n(\mc C)}$, so for any $\wh R^\sh$-algebra $D$, we have a well defined map: $\mc C(D)\to J^n(\mc C)(D)$ which maps $f\in \mc C(D)$ to $[p^n] \wt f$ where $\wt f$ is any lifting of $f$ in $J^n(\mc C)(D)$. Those maps, being functorial in $D$, give the morphism $\up {p^n}:\mc C\to J^n(\mc C)$, with the desired properties.
\end{proof}

For a semi-abelian scheme $\mc A$, $[p^n]_{\mc A}$ is finite, so $\up{p^n} $ is quasi-finite, and it is also proper, hence finite. Therefore its schematic image, $p^n J^n(\mc A)$, is a closed subscheme of $J^n(\mc A)$ which is finite over $\mc A$.

\begin{lem}
The maps $\La_{m,n}$, $\la_n$ and $\up{p^n}\ $are compatible in the following way:
\begin{itemize}
\item for $n\geq m\geq 0$, $\La_{n,m}\circ \la_n=\la_m$;
\item for a commutative group scheme $\mc C$, $\la_n$ is a homomorphism and, on $S$-points, $[p^n]_{J^n(\mc C)}\circ\la_n=\up{p^n}.$
\end{itemize}
\end{lem}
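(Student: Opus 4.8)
The plan is to derive all three assertions from the functoriality of the jet construction, using only three structural facts about $J^n=\mf R_{S_\n/S}\circ\pi_2^{(n),*}$: that it commutes with finite products (being the composite of pullback along $\pi_2^\n$, which preserves finite limits, with the Weil restriction $\mf R_{S_\n/S}$, which is a right adjoint and so also preserves finite limits); that $J^n(S)=S$ for every $n$ (since $\pi_i^{(n),*}S=S_\n$ and $\mf R_{S_\n/S}(S_\n)=S$), so in particular $\Hom_S(J^n(S),J^m(S))=\{*\}$; and that $J^0=\mathrm{id}$ (since $S_{(0)}=S$, $\pi_2^{(0)}=\mathrm{id}_S$ and $\mf R_{S/S}=\mathrm{id}$), so that $\la_0=\mathrm{id}_{\mc C(S)}$ under the identification $J^0(\mc C)=\mc C$.

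First I would prove $\La_{n,m}\circ\la_n=\la_m$ for $n\ge m\ge 0$. The closed immersion $S_{(m)}\inj S_\n$ exhibits $\La_{n,m}$ as a natural transformation $J^n\Rightarrow J^m$ (on $T$-points it is just restriction of morphisms from $T\times_S S_\n$ to $T\times_S S_{(m)}$). Applying naturality to a section $f:S\to\mc C$ gives $\La_{n,m}^{\mc C}\circ J^n(f)=J^m(f)\circ\La_{n,m}^S$; but $\La_{n,m}^S$ is an $S$-morphism $S\to S$, hence the identity, while $J^n(f)=\la_n(f)$ and $J^m(f)=\la_m(f)$, so $\La_{n,m}\circ\la_n=\la_m$. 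Taking $m=0$ yields the special case $\La_{n,0}\circ\la_n=\la_0=\mathrm{id}_{\mc C(S)}$, which will be the key input for the last point.

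Next, that $\la_n$ is a homomorphism when $\mc C$ is commutative. Since $J^n$ preserves products, $J^n(\mc C\times_S\mc C)=J^n(\mc C)\times_S J^n(\mc C)$ and $J^n$ carries the group law $m_{\mc C}$ to $m_{J^n(\mc C)}$, while for sections $f,g\in\mc C(S)$ one has $J^n(\langle f,g\rangle)=\langle J^n(f),J^n(g)\rangle$. Applying $J^n$ to $f\cdot g=m_{\mc C}\circ\langle f,g\rangle$ then gives $\la_n(f\cdot g)=m_{J^n(\mc C)}\circ\langle\la_n(f),\la_n(g)\rangle=\la_n(f)\cdot\la_n(g)$.

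Finally, $[p^n]_{J^n(\mc C)}\circ\la_n=\up{p^n}$ on $S$-points. By the construction of $\up{p^n}$ in the preceding lemma, the map $\mc C(S)\to J^n(\mc C)(S)$ induced by $\up{p^n}$ sends $f$ to $[p^n]_{J^n(\mc C)}\wt f$ for \emph{any} lift $\wt f\in J^n(\mc C)(S)$ of $f$ along $\La_{n,0}$ (such lifts exist on $S$-points since $S$ is affine and $\La_{n,0}$ is a composite of affine bundles by the torsor structure on the $J^k(\mc C)$). By the special case established above, $\la_n(f)$ is such a lift, as $\La_{n,0}(\la_n(f))=f$; hence $[p^n]_{J^n(\mc C)}(\la_n(f))=\up{p^n}(f)$. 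I do not expect a genuine obstacle here: the whole argument is bookkeeping, and the only things requiring care are the three structural properties of $J^n$ listed at the outset, all immediate from its definition as a Weil restriction of a pullback.
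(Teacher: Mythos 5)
Your proof is correct and complete. The paper itself gives no argument for this lemma, deferring entirely to \cite[Lem.~2.5]{RosA}; your derivation --- naturality of $\La_{n,m}$ applied to a section $f:S\to\mc C$ together with $J^n(S)=S$ and $J^0=\mathrm{id}$, product-preservation of $J^n$ for the homomorphism claim, and the observation that $\la_n(f)$ is a lift of $f$ along $\La_{n,0}$ so that $\up{p^n}(f)=[p^n]_{J^n(\mc C)}\la_n(f)$ by the defining construction of $\up{p^n}$ --- is exactly the standard argument and supplies the details the paper omits.
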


\begin{proof}
See \cite[Lem. 2.5]{RosA}.
\end{proof}

\subsection{Jet subschemes}

\hspace \parindent Now suppose that we have a closed subscheme $\mc X\inj\mc A$ in a semi-abelian scheme. Then there is also a closed immersion of the jet schemes, and, we can use it to define the critical schemes of $\mc X$:
\[\Crit^n(\mc X):=J^n(\mc X)\cap p^n J^n(\mc A)\subset J^n(\mc A). \]
Furthermore, the morphism $\Crit^n(\mc X)\to\mc X$ is the restriction of $\La_{n,0}$ to $\Crit^n(\mc X)\subset p^n J^n(\mc A)$ so it is finite, and its schematic image is again a closed subscheme which we call the exceptional scheme $\Exc^n(\mc X)\subset \mc X$.

\subsection{Arc schemes}\label{arc}

\hspace \parindent The special fiber of jet schemes gives a positive characteristic analog of the Greenberg functor construction in \cite[II.1.2]{Ray1016}, so we may want to define it independantly as the scheme of arcs: for an $S$-scheme $\mc W$, 
\[\Arc^n(\mc W):=\mathrm{Res}_{S_n/\ov k}(\mc W\times_{S} S_n)=J^n(\mc W)_0.\]

This construction inherits all properties from jet schemes, and in particular, for a commutative group scheme $\mc C$, we have morphisms $\up{p^n}:C_0\to\Arc^n(\mc C)$ which are finite whenever $\mc C$ is a semi-abelian scheme. However, arc shemes are easier to manipulate than jet schemes since they are defined over a ``small'' algebraically closed field $\ov k$ instead of the much larger $S$.

We will also use the fact that, for an integral subscheme of a semi-abelian scheme $\mc X\inj\mc A$,  $\ov k$-points of $\Exc^n_0(\mc X)=\Arc^n(\mc X)\cap p^n\Arc^n(\mc A)$ corresponds to $S_n$ sections in $\mc X(S_n)\cap p^n\mc A(S_n)$.

\section{A density theorem}
\hspace \parindent To prove Theorem \ref{main}, we will first need the following result and its corollaries:

\begin{thm}\label{indefp}
$i)$ For any $n\geq 0$, $\Exc^n_0(\mc X)$ is a closed subscheme of $X_0$ with the following property: 
\[\Exc^n_0(\mc X)(\ov k)=\left\{P\in X_0(\ov k)\vert P\ \text{lifts to an element of}\ \mc X(S_n)\cap p^n\cdot \mc A(S_n)\right\}.\]

$ii)$ There is an integer $m$ such that $\Exc^m_0(\mc X)(\ov k)$ is the set of specializations of indefinitely $p$-divisible points in $\mc X(S)$.

$iii)$ Furthermore, if $\Tor(A_{\ov L}(\ov L))\cap X_{\ov L}$ is not dense in $X_{\ov L}$, $\Exc^m_0(\mc X)$ is a strict subscheme of $X_0$.
\end{thm}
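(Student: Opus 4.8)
The plan is to handle $i)$, $ii)$, $iii)$ in succession, using freely the jet and arc machinery of Section~2, and in particular the identification recalled in \S\ref{arc}: for an integral $\mc X\inj\mc A$ the $\ov k$-points of $\Arc^n(\mc X)\cap p^n\Arc^n(\mc A)$ are exactly the sections in $\mc X(S_n)\cap p^n\mc A(S_n)$, with $\La_{n,0}$ inducing reduction modulo $\mf m$ on $\ov k$-points. For $i)$ I would argue as follows. The morphism $\Crit^n(\mc X)\to\mc X$ is finite, so its schematic image $\Exc^n(\mc X)$ is a closed subscheme of $\mc X$, whence $\Exc^n_0(\mc X)$ is a closed subscheme of $X_0$. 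Since $\ov k$ is algebraically closed and $\Crit^n(\mc X)\to\mc X$ has finite fibres, a $\ov k$-point $P$ of $\Exc^n_0(\mc X)$ is the $\La_{n,0}$-image of a $\ov k$-point of the special fibre $\Crit^n_0(\mc X)=\Arc^n(\mc X)\cap p^n\Arc^n(\mc A)$; by the quoted identification this says precisely that $P$ lifts to some $\sigma\in\mc X(S_n)\cap p^n\mc A(S_n)$, and conversely; this is the asserted equality.

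For $ii)$: I would first show the exceptional schemes form a descending chain. The map $\La_{n+1,n}$ carries $J^{n+1}(\mc X)$ into $J^n(\mc X)$ by functoriality in closed immersions, and $p^{n+1}J^{n+1}(\mc A)$ into $p^nJ^n(\mc A)$ because $\La_{n+1,n}\circ\up{p^{n+1}}=[p]_{J^n(\mc A)}\circ\up{p^n}$ on $S$-points while $p^nJ^n(\mc A)$ is a subgroup scheme; hence $\La_{n+1,n}(\Crit^{n+1}(\mc X))\subseteq\Crit^n(\mc X)$ and, taking schematic images, $\Exc^{n+1}(\mc X)\subseteq\Exc^n(\mc X)$. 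By noetherianity of $\mc X$ the chain stabilizes; let $m$ be such that $\Exc^n(\mc X)=\Exc^m(\mc X)$, hence $\Exc^n_0(\mc X)=\Exc^m_0(\mc X)$, for all $n\geq m$. One inclusion is immediate: an infinitely $p$-divisible $x\in\mc X(S)$ reduces, for every $n$, into $\Exc^n_0(\mc X)(\ov k)=\Exc^m_0(\mc X)(\ov k)$ by $i)$. For the converse, fix $P\in\Exc^m_0(\mc X)(\ov k)$; by stabilization $P\in\Exc^n_0(\mc X)(\ov k)$ for all $n$, so by $i)$ the set $F_n:=\{\sigma\in\mc X(S_n)\cap p^n\mc A(S_n)\ \vert\ \sigma\bmod\mf m=P\}$ is non-empty; it is moreover finite, since $\{\sigma\in p^n\mc A(S_n)\ \vert\ \sigma\bmod\mf m=P\}=\{\up{p^n}(a)\ \vert\ a\in[p^n]_{\mc A_0}^{-1}(P)\}$ and $[p^n]_{\mc A_0}^{-1}(P)$ is finite. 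Truncation gives maps $F_{n+1}\to F_n$, so by K\"onig's lemma $\varprojlim_n F_n\neq\varnothing$, and by completeness of $\Rshc$ (so that $\mc X(S)=\varprojlim_n\mc X(S_n)$) a compatible system produces $x\in\mc X(S)$ with $x\bmod\mf m=P$ and $x|_{S_n}\in p^n\mc A(S_n)$ for every $n$. The remaining, and most delicate, point is to promote this to genuine infinite $p$-divisibility of $x$ in $\mc A(S)$: for each $n$ the fibre $[p^n]_{\mc A}^{-1}(x)\to S$ is a torsor under the finite flat group scheme $\mc A[p^n]$ that acquires a section over every $S_N$, and I would use that such a torsor over the complete, strictly henselian ring $\Rshc$ must have a section over $S$.

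For $iii)$: assume for contradiction that $\Exc^m_0(\mc X)(\ov k)=X_0(\ov k)$. By $ii)$ every closed point of $X_0$ is the reduction of an infinitely $p$-divisible $x\in\mc X(S)\subset\mc A(S)$, and I claim any such $x$ is torsion. Its reduction lies in $\mc A_0(\ov k)$, which is torsion because $\mc A_0$, being of finite type over the algebraic closure of a finite field, is defined over a finite field; if $N$ is the order of $x\bmod\mf m$ then $Nx$ lies in the kernel of $\mc A(\Rshc)\to\mc A_0(\ov k)$, i.e. in $\wh{\mc A}(\mf m)$ for $\wh{\mc A}$ the formal completion of $\mc A$ along its zero section; on $\wh{\mc A}(\mf m)$ multiplication by $p$ sends $\wh{\mc A}(\mf m^k)$ into $\wh{\mc A}(\mf m^{k+1})$ (the linear part of $[p]$ vanishes in characteristic $p$), so $\bigcap_n[p^n]\wh{\mc A}(\mf m)=0$, and since $Nx$ stays infinitely $p$-divisible, $Nx=0$. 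Each such $x$ is $L$-rational, so $x_{\ov L}\in X_{\ov L}(\ov L)\cap\Tor(A_{\ov L}(\ov L))$. Letting $\mc Z\subset\mc X$ be the Zariski closure of the generic points of these sections, $\mc Z$ is $S$-flat (as $\mc X$ is integral), every torsion $x\in\mc X(S)$ factors through $\mc Z$, so $\mc Z_0(\ov k)\supseteq X_0(\ov k)$, whence $\dim\mc Z_0=\dim X_0=\dim X$; by $S$-flatness $\dim\mc Z_L=\dim\mc Z_0=\dim X$, and since $\mc X$ is integral (so $X=\mc X_L$ is irreducible) this forces $\mc Z_L=X$. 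Thus $X_{\ov L}(\ov L)\cap\Tor(A_{\ov L}(\ov L))$ is Zariski-dense in $X$, hence in $X_{\ov L}$ by faithfully flat base change, contradicting the hypothesis; therefore $\Exc^m_0(\mc X)$ is a strict subscheme of $X_0$. The chief obstacle throughout is the last step of $ii)$: the K\"onig argument only yields a point whose truncations are $p^n$-divisible over each $S_n$, and upgrading to actual $p$-divisibility over $S$ is exactly where the completeness and strict henselianity of $\Rshc$ are indispensable (one could alternatively argue with N\'eron models, as in the remark following Proposition~\ref{epsilon}).
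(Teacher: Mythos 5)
Parts $i)$ and $ii)$ follow the paper's argument, executed somewhat more carefully: you make explicit the descending-chain property of the $\Exc^n_0(\mc X)$ and the K\"onig-type compactness needed to produce a compatible system of lifts, both of which the paper leaves implicit; your finiteness argument for the fibres $F_n$ via $[p^n]^{-1}_{A_0}(P)$ is correct. The step of $ii)$ you rightly flag as delicate --- passing from ``$x|_{S_n}\in p^n\mc A(S_n)$ for all $n$'' to genuine indefinite $p$-divisibility of $x$ in $\mc A(S)$ --- is only sketched, and the torsor statement you invoke is false for arbitrary finite flat torsors over a strictly henselian base (already $H^1_{fppf}(S,\mu_p)\simeq R^*/(R^*)^p\neq 0$ for $R=\ov k[[t]]$); what rescues it is precisely the extra hypothesis of sections over every $S_N$, which amounts to Greenberg's approximation theorem for the excellent henselian ring $\Rshc$ and should be cited or proved. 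A cleaner route is to run K\"onig's lemma on the finite nonempty sets of points $a\in[p^n]^{-1}_{A_0}(P)$ whose associated section lies in $\mc X(S_n)$: a compatible system is an element of $I_p(A_0(\ov k))$, and its image under $\mu_\infty$ is indefinitely $p$-divisible by construction.

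The genuine gap is in $iii)$. You replace the paper's central mechanism (Frobenius equivariance of $\mu_\infty$, the Weil conjectures, and \cite[Theorem 3.5]{PinkRo}) by the claim that every indefinitely $p$-divisible point $x\in\mc A(S)$ is torsion, proved via the formal group. That proof does not work as written: you know $Nx\in\wh{\mc A}(\mf m)$ and $Nx=[p^n](NQ_n)$ with $NQ_n\in\mc A(S)$, but to conclude $Nx\in\bigcap_n[p^n]\wh{\mc A}(\mf m)=0$ you need the roots $NQ_n$ to lie in $\wh{\mc A}(\mf m)$, i.e.\ to reduce to $0$ in $A_0(\ov k)$. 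They need not: $(NQ_n)_0$ is merely an element of $A_0[p^n](\ov k)$, a nontrivial group when the abelian part is ordinary, and such a point generally does not lift to a torsion point of $\mc A(S)$ that one could subtract off (the connected-\'etale sequence of $\mc A[p^n]$ need not split on $S$-points; the obstruction lives in $H^1_{fppf}(S,\mc A[p^n]^0)$, which is nonzero). So your argument only yields $Nx\in\bigcap_n\bigl([p^n]\mc A(S)\cap\wh{\mc A}(\mf m)\bigr)$, which is not visibly trivial. The assertion ``indefinitely $p$-divisible $\Rightarrow$ torsion'' is a real theorem --- it is \cite[Theorem 1.3]{RosA'} in the abelian case, as the paper's remark points out --- and its known proofs go through exactly the Frobenius-equivariance and Weil-conjecture mechanism you are trying to bypass. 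The paper avoids the pointwise claim altogether: it keeps the whole dense set $\Sigma$ of indefinitely $p$-divisible points, shows via $\mu_\infty$ that $\Sigma$ is killed by $Q(\tau)$, and then applies Pink--R\"ossler to get isotriviality and hence density of torsion. Your dimension/flatness argument upgrading density in $X_0$ to density in $X$ matches the paper's, but the torsionness claim as you justify it is a gap, and it is the heart of part $iii)$.
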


\begin{proof}
The first part of the theorem comes directly from the definition of the exceptional schemes and the last remark of the previous section. 

Since $X_0\to \ov k$ is of finite type, there is an $m$ such that we have a topological equality
\[\bigcap_{n\geq 0} \Exc^n_0(\mc X)=\Exc^m_0(\mc X).\]
Pick a point $P_0\in\Exc^m_0(\mc X)(\ov k)$. This gives us a compatible sequence 
\[\cdots\to\wt {P_n},\to\wt{P_{n-1}},\to\cdots\to{P_0}\]
 where $\wt{P_n}\in \mc X(S_n)\cap p^n \mc A(S_n)$ is a lift-up of $P_0\in\Exc^n_0(\mc X)$ and the arrows are the restriction morphisms $\mc X(S_n)\to\mc X(S_{n-1})$. This sequence defines an element $P\in \mc X(S)=\varprojlim \mc X(S_n)$ (with specialization $P_0$) which is indefinitely $p$-divisible, thus proving $ii)$.
 
Let us show $iii)$ by contraposition, so suppose that every point of $X_0(\ov k)$ lifts up to an indefinitely $p$-divisible point in $\mc X(S)$.

 Since $\mc X\to S$ is flat, $\dim_{L} X=\dim_{\ov k} X_0$, so the set $\Sigma$ of indefinitely $p$-divisible points of $\mc X(S)\subset X(L)$ must be dense in the generic fiber $X$, so the corresponding set $\Sigma_{\ov L}$ of points in $X_{\ov L}$ is also dense.

Let $\tau\in\mathrm{Aut}_K(\ov L)$ be a lifting of the Frobenius automorphism $\varphi\in\Gal(\bar k/k)$ (choose any extension of a lifting to $\Gal(\ov K/K)$). Because $k$ is finite, of cardinality $q$, we get from the Weil conjectures that there is a polynomial $Q\in\Z[X]$ with complex roots of modulus $\sqrt q$ and $q$ (respectively for the abelian part and the torus part), such that for every point $x\in A_0(\ov k)$, $Q(\varphi)(x)=0$. 

We need to define a subset of $A_0(\ov k)$ which maps Frobenius equivariantly onto the indefinitely $p$-divisible points of $\mc A(\ov L)$. To do this, we use the construction of \cite[Proposition 2.3]{RosA'}:

Let $I_p(A_0(\ov k)):=\varprojlim_{l\geq 0} A_0(\ov k)$ where the transition morphisms are all the multiplication-by-$p$ morphism, so $x\in I_p(A_0(\ov k))$ is a sequence $(x_l)_{l\geq 0}\in A_0(\ov k)^{\mb N}$ such that $x_{l+1}=[p]x_l$ for any $l\geq 0$. In other words, $(x_l)_{l\geq 0}$ is a compatible system of indefinitely $p$-divisible points of $A_0(\ov k)$.

Let $n$ be a non-negative integer. From the previous section, we know that there is a map $\up{p^n}:\mc A\to J^n(\mc A)$; if $\mu_n:A_0\to \Arc^n(\mc A)$ is its special fiber, it has the following description on $I_p(A_0(\ov k))$:

from $(x_l)_{l\geq 0}\in I_p(A_0(\ov k))$ we take any lifting $\wt x_n\in \mc A(S_n)$ of $x_n$; we then have $\mu_n((x_l)_{l\geq 0})=[p^n]\wt x_n\in\mc A(S_n)\simeq \Arc^n(\mc A)(\ov k)$ (this description is easily deduced from the way we previously constructed $\up{p^n}\text{).}$

When $n$ varies, those maps are compatible with the restriction morphisms $\mc A(S_n)\to\mc A(S_{n'})$ for $n\geq n'$, so we get a map $\mu_\infty:I_p(A_0(\ov k))\to \varprojlim_{n\geq 0} \mc A(S_n)\simeq\mc A(S)\subset A_{\ov L}(\ov L)$. If we  give $I_p(A_0(\ov k))$ the natural Galois action, it is clear that $\mu_\infty$ is a Frobenius-equivariant morphism, so $\mu_\infty\circ\varphi=\tau\circ\mu_\infty$.

We now need to prove the following lemma, corresponding to \cite[Lemma 2.4]{RosA'} (notice that in this reference, only the abelian case is proved):

\begin{lem}
The image of $\mu_\infty$ is precisely the set of indefinitely $p$-divisible points in $\mc A(S)$:
 \[\Im \mu_\infty=\{P\in \mc A(S)\lvert \forall n\geq 0\ \exists Q\in \mc A(S),\ [p^n]Q=P\}.\]
\end{lem}
\begin{proof}
The inclusion "$\subset$" is obvious, once noticed that all points in $I_p(A_0(\ov k))$ are indefinitely $p$-divisible (a $[p]$-th root of an element is given by a shift of the compatible sequence). To prove "$\supset$", we will first assume the following claim:
\begin{claim}
 For all indefinitely $p$-divisible point $P\in\mc A(S)$, there is a sequence 
 \[\cdots\xrightarrow{[p]} P_n\xrightarrow{[p]}P_{n-1}\xrightarrow{[p]}\cdots\xrightarrow{[p]} P.\]
\end{claim}
Assuming this, from an indefinitely $p$-divisible point $P$, we get a sequence as in the claim whose restriction $x=(x_l)_{l\geq 0}$ to $A_0(\ov k)$ is in $I_p(A_0(\ov k))$. It is then an easy exercise with the description given above to check that $\mu_\infty(x)=P$, using the fact that $\Rshc=\varprojlim_n \Rsh_n$.

Let's now prove the claim. Let $P\in\mc A(S)$ be an indefinitely $p$-divisible point. By definition, there is a sequence $Q_n\in\mc A(S)$ such that $[p^n]Q_n=P$. The set $K_1:=\{[p^{n-1}]Q_n,n\geq 1\}$ is a subset of $[p]\inv(P)$ which is finite, because $\ker [p]$ is a finite and proper $S$-scheme. Therefore, there is $P_1\in K_1$ such that $[p^{n-1}]Q_n=P_1$ for an infinite number of indices, which means that $P_1$ is also indefinitely $p$-divisible. By induction, we get the sequence needed to prove the claim.

\end{proof}

\noindent\emph{End of the proof of Theorem \ref{indefp}.}

From the Frobenius equivariance of $\mu_\infty$, we get that $\Sigma_{\ov L}=\Im \mu_\infty \cap \mc X(S)$ is in the kernel of $Q(\tau)$, so, being also dense in $X_{\ov L}$, we can apply \cite[Theorem 3.5]{PinkRo} to say that $X_{\ov L}$ is isotrivial up to quotient, therefore its torsion points must be dense in it.
\end{proof}

\begin{rems}
 \begin{itemize}
  \item Notice that we have shown the stronger result that if a subvariety of $A$ has a dense set of indefinitely $p$-divisible points, then it is isotrivial up to quotient. Thus, it gives (with the rest of the proof below) a full proof of the Mordell-Lang conjecture, without the need for the Manin-Mumford conjecture.
  \item In the case of abelian varieties, a shorter proof can be provided, using the fact \cite[Theorem 1.3]{RosA'} that indefinitely $p$-divisible points are torsion points, although the proof given there is essentially the same as this one.
  \item This result should also be true in mixed characteristic, when we assume $S=\Sp \wh{\mc O_K^\sh}$ where $K$ is a finite extension of $\mb Q_p$. Indeed, the Greenberg functor \cite{Ray1016} should give an analog of the scheme of arcs good enough to be able to transpose the proof we have given in that setting.
 \end{itemize}

\end{rems}

The following corollary is a uniform version of (a weak form of) the theorem:

\begin{cor}\label{Exc0}
 Suppose that for all $a\in\wt{\mc A}(\wt S)$, $\Tor(A_{\ov L}({\ov L}))\cap X_{{\ov L}}^{+a}$ is not dense in $X_{{\ov L}}^{+a}$. Then there exists an integer $\al$ such that for any $a\in\wt{\mc A}(\wt S)$,
\[U_\al(a):=\Exc^\al_0(\mc X^{+a})(\ov k)=\{P\in X_0^{+a}(\ov k)\vert P\ \text{lifts to an element of}\ \mc X^{+a}(S_\al)\cap p^\al\cdot \mc A(S_\al)\}\]
is not Zariski dense in $X_0^{+a}$.
\end{cor}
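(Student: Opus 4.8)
The plan is to deduce the uniform bound $\al$ from Theorem \ref{indefp} by a Noetherian/constructibility argument applied to the total space of translates. First I would consider the group scheme $\wt{\mc A}\times_{\wt S}\wt{\mc A}\to\wt{\mc A}$ (second projection) together with the universal translate $\mc X^{\mathrm{univ}}\inj \wt{\mc A}\times_{\wt S}\wt{\mc A}$, i.e. the image of $\mc X\times_{\wt S}\wt{\mc A}$ under the map $(x,a)\mapsto(x+a,a)$; this is a closed subscheme, flat over $\wt{\mc A}$ since it is isomorphic to $\mc X\times_{\wt S}\wt{\mc A}$, whose fibre over a point $a:\wt S\to\wt{\mc A}$ is exactly $\mc X^{+a}$. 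The key point is that all the jet/arc constructions are functorial and commute with base change on the base $S$-scheme, so $\Exc^n_0(\mc X^{+a})$ is the fibre over $a$ of a single closed subscheme $\mathcal E^n\inj (X^{\mathrm{univ}})_0$ inside the arc scheme $\Arc^n(\mc A\times_S\mc A)$, namely the schematic image of $\Arc^n(\mc X^{\mathrm{univ}})\cap p^n\Arc^n(\mc A\times_S\mc A)$. By part $i)$ of Theorem \ref{indefp}, on $\ov k$-points $\mathcal E^n$ does cut out exactly the set $U_n(a)$ described in the statement, fibrewise.

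Next I would use that $(X^{\mathrm{univ}})_0$ is Noetherian and the chain $\mathcal E^0\supseteq\mathcal E^1\supseteq\cdots$ is decreasing (the inclusions $\Exc^{n}_0\supseteq\Exc^{n+1}_0$ hold uniformly because $\La_{n+1,n}$ sends $p^{n+1}J^{n+1}$ into $p^nJ^n$ and $J^{n+1}(\mc X)$ into $J^n(\mc X)$), so the chain stabilizes: there is $\al$ with $\mathcal E^{\al}=\bigcap_{n}\mathcal E^n$ as closed subschemes. For this single $\al$, part $ii)$ of Theorem \ref{indefp} applied to $\mc X^{+a}$ (which, by construction, has good reduction over $S$ and is an integral closed subscheme once we note translation by $a$ is an automorphism of $\mc A$) shows that $\Exc^{\al}_0(\mc X^{+a})(\ov k)$ is precisely the set of specializations of indefinitely $p$-divisible points of $\mc X^{+a}(S)$. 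Then part $iii)$, whose hypothesis is exactly the non-density assumption we are given (for every $a$), yields that $\Exc^{\al}_0(\mc X^{+a})$ is a strict closed subscheme of $X_0^{+a}$, hence $U_{\al}(a)$ is not Zariski dense in $X_0^{+a}$. The description of $U_\al(a)$ as a lifting set is just part $i)$ again.

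The one subtlety I would be careful about is the passage from "the chain stabilizes in the universal family" to "the stabilization index works for every individual fibre": a priori the index where $\Exc^n_0(\mc X^{+a})$ stabilizes could depend on $a$, and only an upper bound is needed, but it must be a single bound valid simultaneously. This is exactly what the universal construction buys us — stabilization of the chain $\mathcal E^n$ of closed subschemes of the Noetherian scheme $(X^{\mathrm{univ}})_0$ at some $\al$ forces $\mathcal E^{\al}_a=\bigcap_n\mathcal E^n_a$ for every $a$, because taking fibres is exact on the right and $\mathcal E^{\al}\subseteq\mathcal E^n$ for all $n\le\al$ while $\mathcal E^\al=\mathcal E^{\al+j}$ for all $j\ge0$. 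So I expect the main obstacle to be purely bookkeeping: checking that $\Arc^n$ (equivalently $J^n$) genuinely commutes with base change $\wt{\mc A}\to\wt S$ in the form needed, i.e.\ that $\Arc^n(\mc X^{\mathrm{univ}})_a\cong\Arc^n(\mc X^{+a})$ compatibly with the inclusion into $\Arc^n(\mc A)$ and with the $p^n$-multiplication maps $\up{p^n}$. Granting the functoriality properties already asserted in Section 2 (jet schemes keep closed immersions and are compatible with the group-scheme structure and with $\La_{n,m}$, $\la_n$, $\up{p^n}$), this compatibility is immediate, and the corollary follows.
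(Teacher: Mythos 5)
There is a genuine gap at the heart of your argument, namely the claim that the sets $\Exc^n_0(\mc X^{+a})$ are the fibres of a single closed subscheme $\mathcal E^n$ of the Noetherian scheme $(X^{\mathrm{univ}})_0$. The exceptional locus $\Exc^n_0(\mc X^{+a})$ is determined by the image of $a$ in $\mc A(S_n)$ (its $n$-jet), not by its reduction $a_0\in A_0(\ov k)$ alone: two sections $a,a'$ with $a_0=a'_0$ but differing modulo $\mf m^{n+1}$ give different subsets $\mc X^{+a}(S_n)\neq\mc X^{+a'}(S_n)$ of $\mc A(S_n)$, hence in general different sets $U_n(a)\neq U_n(a')$. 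Consequently the natural parameter space for the family at level $n$ is $\Arc^n(\mc A)$ (equivalently $\mc A(S_n)$), which changes with $n$; if instead you push everything down to $(X^{\mathrm{univ}})_0$ over $A_0$ via $\La_{n,0}$, the fibre of your $\mathcal E^n$ over $a_0$ becomes the union of the $\Exc^n_0(\mc X^{+\tilde a})$ over \emph{all} lifts $\tilde a\in\mc A(S_n)$ of $a_0$ (an infinite set of lifts), which strictly contains $U_n(a)$, and stabilization of that chain tells you nothing about the individual $U_\al(a)$. There is no single Noetherian scheme carrying all the $\mathcal E^n$ as a decreasing chain with the correct fibres --- the limit of the $\Arc^n(\mc A)$ is an infinite-dimensional pro-scheme --- so the chain-stabilization step collapses. (Two secondary issues: schematic image need not commute with base change, and your intersection with $p^n\Arc^n(\mc A\times_S\mc A)$ wrongly imposes $p^n$-divisibility on the parameter coordinate as well.)

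The paper replaces Noetherianity by compactness: it uses that $\wt{\mc A}(\wt S)$ is compact for the $\mf m$-adic topology (because $R$ is a complete discrete valuation ring with \emph{finite} residue field), together with exactly the observation above, that $U_n(a)$ depends only on the image of $a$ in $\mc A(S_n)$. If the minimal non-density indices $m(a_n)$ were unbounded along a sequence $(a_n)$, one extracts a limit $a$ with $a\equiv a_n$ in $\wt{\mc A}(\wt S_n)$, forcing $m(a)\geq n$ for all $n$ and contradicting the finiteness of $m(a)$ given by Theorem \ref{indefp}. Your fibrewise use of parts $ii)$ and $iii)$ of that theorem at a uniform $\al$, and the observation that $n\mapsto\Exc^n_0$ is decreasing, are both correct; what is missing is a valid source of uniformity in $a$, and for that you need the arithmetic compactness of $\wt{\mc A}(\wt S)$ rather than Noetherianity of a universal family.
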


\begin{proof}
 For any $a\in\wt{\mc A}(\wt S)$, let $m(a)$ be the smallest integer $m$ such that $U_m(a)$ is not dense in $X_0^{+a}$ (the theorem gives us the existence of such an $m$). Suppose there exists a sequence $(a_n)_{n\in\N}\in\wt{\mc A}(\wt S)^\N$ contradicting the statement, so that $m(a_n)$ strictly increases. Since $\wt{\mc A}$ is an $\wt S$-scheme of finite type, $\wt{\mc A}(\wt S)$ is compact for the topology induced by the one on $R$, the underlying ring of $\wt S$. This comes from the fact that $R$ itself is compact, being a discrete valuation ring with finite residue field. Therefore we may assume (by taking a subsequence if necessary) that $(a_n)$ converges for this topology towards $a\in\wt{\mc A}(\wt S)$. Taking yet another subsequence, we may also assume that $a_n$ and $a$ have the same image in $\wt{\mc A}(\wt S_n)$, since this space is discrete. Then necessarily, $m(a_n)\geq n$ (because it is a strictly increasing sequence), which implies that $m(a)\geq n$ (because $U_n(c)$ depends only on the image of $c$ in $\mc A(S_n)$). However, this cannot be true for all $n\in \mb N$ since $m(a)$ is finite, therefore our supposition was wrong and the corollary is proved.
\end{proof}

\hspace \parindent We can now finish the proof of the main theorem:

\begin{thm}\label{main}
 Suppose that for any field extension $L'/L$ and any point $Q\in A(L')$, the set $X_{L'}^{+Q}\cap\Tor(A(L'))$ is not Zariski dense in $X_L^{+Q}$. Then $X\cap\mathrm{Div}^p(\Ga)$ is not Zariski dense in $X$.
\end{thm}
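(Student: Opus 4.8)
The plan is to push the data $A,X,\Ga$ onto the integral model built in Section~1 and then apply the uniform statement Corollary~\ref{Exc0} to a suitable \emph{finite} family of translates of $\mc X$.

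\textbf{Setup.} Using Proposition~\ref{epsilon} together with the invariance lemma (and, if necessary, enlarging the finite field $k$ and the local field $K$), I would first reduce to the situation where $A$, $X$ and $\Ga$ come from a semi-abelian scheme $\wt{\mc A}\to\wt S$ of good reduction, from a flat closed subscheme $\mc X\inj\wt{\mc A}$ whose generic fibre is $X$, and from a finitely generated subgroup $\Ga\subseteq\wt{\mc A}(\wt S)$ (the generators of $\Ga$ extend to $\wt S$-sections by Proposition~\ref{epsilon}). Because $\Rshc$ is strictly henselian, Corollary~\ref{zeta} gives $\Div^p(\Ga)\subseteq\mc A(S)$, and since $\mc X\inj\mc A$ is closed with generic fibre $X$ this refines to $\Si:=X\cap\Div^p(\Ga)\subseteq\mc X(S)$. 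Finally, specialising the hypothesis of the theorem to $L'=\ov L$ and to the points $Q$ coming from $\wt{\mc A}(\wt S)$ is exactly the hypothesis of Corollary~\ref{Exc0}; so I fix once and for all the integer $\al$ it produces, for which $\Exc^\al_0(\mc X^{+a})$ is a strict closed subscheme of $X_0^{+a}$ for every $a\in\wt{\mc A}(\wt S)$.

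\textbf{The finite family of translates.} Here the key point is that $\Ga/p^\al\Ga$ is finite, $\Ga$ being finitely generated. Choosing coset representatives $\de_1,\dots,\de_s\in\Ga$, I form the finite set
\[\{a_1,\dots,a_T\}\ :=\ \{\,u_0\de_i\ :\ u_0\in(\Z/p^\al\Z)^\times,\ 1\le i\le s\,\}\ \subseteq\ \wt{\mc A}(\wt S),\]
and I claim that $\Si\subseteq\bigcup_{j=1}^{T}\bigl(\Exc^\al(\mc X^{+(-a_j)})^{+a_j}\bigr)_L$. Indeed, let $x\in\Si$, so $[m]x=\ga\in\Ga$ for some $m$ prime to $p$. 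Write $\ga=p^\al\ga'+\de_i$ with $\ga'\in\Ga$, pick $u,v\in\Z$ with $um+vp^\al=1$, and compute inside the group $\mc A(S)$:
\[x\ =\ u\,[m]x+p^\al\cdot(vx)\ =\ p^\al\cdot(u\ga'+vx)+u\de_i.\]
Replacing $u$ by its class $u_0\in(\Z/p^\al\Z)^\times$ modulo $p^\al$ moves the leftover into the $p^\al$-divisible term, so $x-a_j\in p^\al\mc A(S)$ for $a_j=u_0\de_i$. As $x\in\mc X(S)$, translation by $-a_j$ gives $x-a_j\in\mc X^{+(-a_j)}(S)\cap p^\al\mc A(S)$. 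By the compatibilities of $\La_{\al,0}$, the section maps $\la_\al$ and multiplication by $p^\al$ on jet schemes recorded in Section~2, $\la_\al(x-a_j)$ lies in $\Crit^\al(\mc X^{+(-a_j)})(S)$ and is carried to $x-a_j$ by $\La_{\al,0}$; hence $x-a_j$ is an $S$-point of $\Exc^\al(\mc X^{+(-a_j)})$, and $x=(x-a_j)+a_j$ an $S$-point of the translated closed subscheme $\Exc^\al(\mc X^{+(-a_j)})^{+a_j}$ of $\mc X$. In particular $x$, as a point of the generic fibre, lies in $E_j:=\bigl(\Exc^\al(\mc X^{+(-a_j)})^{+a_j}\bigr)_L\subseteq X$, which proves the claim.

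\textbf{Conclusion.} It suffices to show that each $E_j$ is a \emph{strict} closed subscheme of $X$: then $\bigcup_{j=1}^{T}E_j$ is a strict closed subscheme of the irreducible $X$ containing $\Si$, and so $X\cap\Div^p(\Ga)$ is not Zariski dense. Equivalently, I must see $\Exc^\al(\mc X^{+(-a_j)})_L\subsetneq X^{+(-a_j)}$. Suppose not. Since $\mc X^{+(-a_j)}\to S$ is flat (being a translate of $\mc X$) and $S=\Sp\Rshc$ is the spectrum of a discrete valuation ring, $\mc X^{+(-a_j)}$ is the scheme-theoretic closure of its generic fibre, so the closed subscheme $\Exc^\al(\mc X^{+(-a_j)})$, agreeing with $\mc X^{+(-a_j)}$ on the generic fibre, would equal all of $\mc X^{+(-a_j)}$. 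But then the finite morphism $\La_{\al,0}:\Crit^\al(\mc X^{+(-a_j)})\to\mc X^{+(-a_j)}$ is surjective; restricting to the special fibre it stays surjective, and (both schemes being of finite type over the algebraically closed field $\ov k$) surjective on $\ov k$-points, so by Theorem~\ref{indefp}$(i)$ every point of $X_0^{+(-a_j)}(\ov k)$ lifts to an element of $\mc X^{+(-a_j)}(S_\al)\cap p^\al\mc A(S_\al)$, i.e. $\Exc^\al_0(\mc X^{+(-a_j)})=X_0^{+(-a_j)}$, contradicting the choice of $\al$. Hence every $E_j$ is strict and the theorem follows.

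\textbf{Where the difficulty lies.} The step I expect to carry the real weight is the organisation of the translates: exploiting finite generation of $\Ga$ to obtain the \emph{finite}, \emph{$\wt S$-integral} family $\{a_j\}$, so that the single uniform bound $\al$ of Corollary~\ref{Exc0} governs all of them at once and the covering of $X\cap\Div^p(\Ga)$ is by finitely many proper closed subschemes rather than an a priori infinite family. A secondary delicate point is the flat descent over the discrete valuation ring $\Rshc$, needed to pass from ``$\Exc^\al_0$ is strict on the special fibre'' to ``$\Exc^\al$ is strict on the generic fibre''.
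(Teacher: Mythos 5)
Your proposal is correct and follows essentially the same route as the paper: fix the uniform integer $\al$ from Corollary~\ref{Exc0}, use finite generation of $\Ga$ to cover $X\cap\Div^p(\Ga)$ by finitely many sets of the form $\mc X(S)\cap\bigl(a_j+p^\al\mc A(S)\bigr)$, trap each of these in a translate of $\Exc^\al$ via $\la_\al$ and $\La_{\al,0}$, and conclude by irreducibility of $X$. The only cosmetic differences are that you make the Bézout manipulation explicit (using the slightly larger family $u_0\de_i$ instead of coset representatives of $\Ga'/p^\al\Ga'=\Ga/p^\al\Ga$) and that you run the special-fibre/generic-fibre comparison in the contrapositive direction; both variants are sound.
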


\begin{proof}

We need the following lemma:

\begin{lem}\label{al}
 Under the hypothesis of the theorem, there is a positive integer $\al$, depending only on $\mc A$ and $\mc X$, such that $\Exc^\al(\mc X^{+Q})\subsetneq\mc X^{+Q}$ for all $Q\in\Ga$.
\end{lem}

\begin{proof}
 The hypothesis of the theorem allows to apply Corollary \ref{Exc0}, since $\wt{\mc A}(\wt S)\subset A(L)$. Therefore there is an integer $\al$ such that $\Exc^\al_0(\mc X^{+Q})$ is a proper subscheme of $\mc X_0^{+Q}$ for any $Q\in\Ga\subset\wt{\mc A}(\wt S)$, but, since $\mc X^{+Q}$ is integral, its closed subscheme $\Exc^\al(\mc X^{+Q})$ is distinct from $\mc X^{+Q}$ because it is true of the special fibers $\Exc^\al_0(\mc X^{+Q})\inj X^{+Q}_0$.
\end{proof}

 Let's prove the theorem: Let $Q\in \Ga$, $n\in\mb N$; using the $\zeta$ function from Corollary \ref{zeta}, we have the following:
\begin{align*} 
   \mc X^{+Q}(S)\cap p^n\cdot \zeta(\Ga') & = \La_{n,0}\circ \la_n[\mc X^{+Q}(S)\cap p^n\cdot \zeta(\Ga')] \\
     & \subset \La_{n,0}[\la_n(\mc X^{+Q}(S))\cap\la_n(p^n\cdot \zeta(\Ga'))] \\
     & \subset \La_{n,0}[J^n(\mc X^{+Q})\cap p^n\cdot J^n(\mc A)(S)]\subset \La_{n,0}[\rm {Crit}^n(\mc X^{+Q})]\\
     & = \Exc^n(\mc X^{+Q}).
  \end{align*}
 
 Let $\al$ be as in the lemma; choosing a set of representatives $Q_1,\cdots,Q_k\in\Ga$ for $\Ga'/p^\al\Ga'=\Ga/p^\al \Ga$ (which is assumed to be finite), this computation shows that 
 \[\mc X(S) \cap \left(Q_i+p^\al\cdot\zeta(\Ga')\right)\subset \rm {Exc}^\al(\mc X^{-Q_i})^{+Q_i}\quad \mathrm{so}\ 
  \mc X(S)\cap \zeta(\Ga') \subset\bigcup_{i\leq k} \rm {Exc}^\al(\mc X^{-Q_i})^{+Q_i}.\]
 According to the lemma, each of the $\rm {Exc}^\al(\mc X^{+Q_i})$'s is non dense, so it has (relative) dimension strictly smaller than the one of $\mc X$ (since $\mc X$ is irreducible). Thus a finite union of their translates cannot be dense in $\mc X$.
% suppose $X\cap \Ga'$ is dense in $X$. Since $\Ga'/p\Ga'$ is finite and $X$ is irreducible, for some $P_1\in\Ga'$, $(X^{+P_1})\cap p\cdot \Ga'$ is dense in $X^{+P_1}$. We may assume that $P_1\in\Ga$, because the natural map $\Ga\to\Ga'$ induces an isomorphism $\Ga/p\Ga\simeq\Ga'/p\Ga'$. Iterating this construction, which is allowed since $p^i\Ga'/p^{i+1}\Ga'$ is finite for all $i\geq 0$, gives a sequence of points $P_i\in\Ga$ such that $X^{+P_1+\cdots+P_n}\cap p^n\cdot\Ga'$ is dense in $X^{+P_1+\cdots+P_n}$ for all $n\geq 1$. Note that since $P_1+\cdots+P_n\in\Ga\subset A(S)$, $\mc X^{+P_1+\cdots+P_n}$ is well defined.

% Remember that $\Ga'=\Div^p(\Ga)\subset \Div^p(\mc A(S))$, and $S$ is strictly henselian, so Corollary \ref{zeta} applies and there is a map $\zeta:\Div^p(\mc A(S))\to \mc A(S)$ which is a right inverse to generization.
%Let $\al$ be as in the lemma. For $Q=\sum_{i=1}^\al P_i\in\Ga$, $X^{+Q}\cap p^\al\cdot\Ga'$ is dense in $\mc X^{+Q}$. However, $X^{+Q}\cap p^\al\cdot\Ga'\subset \mc X^{+Q}(S)\cap p^\al\cdot\zeta(\Ga')$ as topological spaces, and this gives the following computation:

%so this exceptional subscheme must be $\mc X^{+Q}$, which contradicts Lemma \ref{al}.

\end{proof}

\begin{rem}
The proof of Theorem \ref{main} indicates that it should be possible to give an effective bound on the number of irreducible components of the Zariski closure of $X\cap \Ga'$, at least in the abelian case, using some intersection theory, as has been done for characteristic $0$ in \cite{BuiEff}. However, as seen in \cite{BuiVol}, the bound in positive characteristic should be much smaller, under some mild assumptions on a field of definition for $X$. 
\end{rem}

%\bibliography{necessaire}{}
%\bibliographystyle{alphanum}

\end{document}